\date{26 June 2013 version 6}
\def\XZERO{\textstyle\frac12}
\def\XONE{\textstyle\frac12}
\def\XTWO{\textstyle\frac \alpha{4(1-\alpha)}}
\def\XTHREE{\textstyle\frac1{2(\alpha-1)}}
\def\XFOUR{\textstyle\frac12}
\def\XFIVE{\textstyle\frac{\alpha-1}{4(2-\alpha)}}
\def\XSIX{\textstyle\frac{3-\alpha}{4(1-\alpha)(2-\alpha)}}
\def\XSEVEN{\textstyle\frac1{(1-\alpha)(2-\alpha)}}
\def\XEIGHT{-\textstyle\frac{\alpha+1}{4(2-\alpha)(1-\alpha)}}
\def\XNINE{-\textstyle\frac{1-\alpha}{8(2-\alpha)}}
\def\XTEN{\textstyle\frac{3\alpha-5}{16(2-\alpha)}}
\def\XELEVEN{-\textstyle\frac{1-\alpha}{8(2-\alpha)}}
\def\XTWELVE{-\textstyle\frac{3-\alpha}{4(1-\alpha)(2-\alpha)}}
\def\XTHIRTEEN{0}
\def\XFOURTEEN{\textstyle\frac1{2(\alpha-2)}}
\def\YZERO{\textstyle\frac12}
\def\YONE{\textstyle\frac12}
\def\YTWO{0}
\def\YTHREE{-\textstyle\frac1{2}}
\def\YFOUR{\textstyle\frac12}
\def\YFIVE{-\textstyle\frac{1}{8}}
\def\YSIX{\textstyle\frac{3}{8}}
\def\YSEVEN{\textstyle\frac1{2}}
\def\YEIGHT{-\textstyle\frac{1}{8}}
\def\YNINE{-\textstyle\frac{1}{16}}
\def\YTEN{-\textstyle\frac{5}{32}}
\def\YELEVEN{-\textstyle\frac{1}{16}}
\def\YTWELVE{-\textstyle\frac{3}{8}}
\def\YTHIRTEEN{0}
\def\YFOURTEEN{-\textstyle\frac1{4}}
\def\pbg{\vphantom{\vrule height 11pt}}
\newtheorem{theorem}{Theorem}[section]
\newtheorem{lemma}[theorem]{Lemma}
\newtheorem{remark}[theorem]{Remark}
\newtheorem{corollary}[theorem]{Corollary}
\begin{document}
\title[Heat flow]
{Heat flow out of a compact {manifold}}
\author{M. van den Berg and P. Gilkey}
\address
{School of Mathematics, University of Bristol\\
University Walk, Bristol BS8 1TW\\United Kingdom}
\begin{email}{mamvdb@bris.ac.uk}\end{email}
\address{Mathematics Department, University of Oregon\\
Eugene, OR 97403\\USA}
\begin{email}{gilkey@uoregon.edu}\end{email}
\begin{abstract} We discuss the heat content asymptotics associated with the heat flow
out of a smooth compact manifold in a larger compact Riemannian
manifold. Although there are no boundary conditions, the
corresponding heat content asymptotics involve terms localized
on the boundary. The classical pseudo-differential calculus is
used to establish the existence of the complete asymptotic series
and methods of invariance theory are used to determine the first
few terms in the asymptotic series in terms of geometric data. The
operator driving
the heat process is assumed to be an operator of Laplace type.\\
{Subject classification: 58J32; 58J35; 35K20}
 \end{abstract}

\maketitle

\section{Introduction}
\subsection{Basic assumptions}\label{sect-1.1}  We adopt the following notational conventions.
We let $(M,g)$ be a smooth Riemannian
manifold of dimension $m$.
Let $\Omega\subset M$ be a smooth compact submanifold of $M$ which has dimension
$m$ as well.
We suppose that the boundary of $\Omega$ is non-empty and smooth. In this paper,
we shall investigate the heat flow out of $\Omega$ into $M\setminus\Omega$ where $(M,g)$
satisfies exactly one of the following three conditions:
\begin{enumerate}
\item $M$ is a compact and without boundary.
\item $(M,g)=(\mathbb{R}^m,g_e)$ where $g_e$ is the usual Euclidean inner metric
on $\mathbb{R}^m$.
\item $M$ is a compact submanifold of $\mathbb{R}^m$ with smooth boundary and $g=g_e|_M$.
\end{enumerate}
The case of flat space is fundamental in this subject and hence we have concentrated on that
setting in (2) and (3) to avoid technical difficulties. However, the results of this paper hold more generally.
\subsection{Heat content}
Let $\Delta_g$ be
the associated Laplace-Beltrami operator acting on smooth
functions on $M$. The heat equation on $M$ takes the form
\begin{equation}\label{eqn-1.a}
\Delta_gu+\frac{\partial u}{\partial t}=0,\ x\in M,\ t>0,
\end{equation}
with initial condition
\begin{equation}\label{eqn-1.b}
u(x;0)=\phi(x),\ x\in M,
\end{equation}
where $\phi:M\rightarrow \mathbb R$ is continuous. Here $u(x;t)$
represents the temperature at a point $x\in M$ at time $t$ if $M$
has initial temperature profile $\phi$. We say that $K(x,\tilde x;t)$ is a {\it fundamental solution}
if we have:
$$
u(x;t)=\int_MK(x,\tilde \phi(\tilde x)d\tilde x\,.
$$
We say that $(M,g)$ is {\it stochastically complete} if
$$\int_MK(x,\tilde x;t)d\tilde x=1\text{ for all }x\in M,\ \  t>0\,.$$

In order to guarantee that
 Equation~\eqref{eqn-1.a} and Equation~\eqref{eqn-1.b} have a unique solution,
we have to impose some conditions on the geometry of $M$. These
are summarized in the following result (see Chapter VII of \cite{C} and the references therein):

\goodbreak\begin{theorem}\label{thm-1.1}
\ \begin{enumerate}
\item If $(M,g)$ is  compact without boundary or if $(M,g)$ is
complete and with Ricci curvature bounded from below, then there
exists a unique minimal positive fundamental solution $K$.
Moreover $(M,g)$ is stochastically complete.
\item If $(M,g)$ is compact with non-empty smooth boundary $\partial M$, then there exists a unique minimal
positive fundamental solution $K$. In this case $K$ is the
Dirichlet heat kernel for $M$; $u(x;t)=0$
for all $x\in \partial M$ and for all $t>0$.
\end{enumerate}
\end{theorem}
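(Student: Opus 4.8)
The plan is to construct $K$ by a compact exhaustion and to extract every assertion from the parabolic maximum principle together with standard spectral theory. I would begin with the purely local building block: on any precompact open set with smooth boundary (and in particular on $M$ itself when $M$ is closed, in which case ``boundary'' is vacuous), the Dirichlet Laplacian has discrete spectrum $0\le\lambda_1\le\lambda_2\le\cdots$ with an $L^2$-orthonormal basis of smooth eigenfunctions $\{\phi_k\}$, each vanishing on the boundary when there is one, and I would set $K(x,\tilde x;t)=\sum_k e^{-\lambda_k t}\phi_k(x)\phi_k(\tilde x)$. Weyl's law, together with polynomial sup-norm bounds on the $\phi_k$ and their derivatives, makes this series and all its $x$-, $\tilde x$- and $t$-derivatives converge uniformly for $t$ bounded away from $0$; hence $K$ is smooth, solves \eqref{eqn-1.a} in each variable, and $u(x;t)=\int_M K(x,\tilde x;t)\phi(\tilde x)\,d\tilde x\to\phi$ as $t\downarrow0$, with $u=0$ on the boundary. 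Positivity follows from the maximum principle ($u$ has no negative interior minimum, so $\phi\ge0$ forces $u\ge0$; run $\phi$ through approximate identities to get $K\ge0$, then upgrade to $K>0$ by the strong maximum principle). This already proves the closed case of part (1) and, modulo minimality, all of part (2).

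For general complete $M$ with Ricci curvature bounded below I would fix a nested exhaustion $\Omega_1\subset\Omega_2\subset\cdots$ by precompact open sets with smooth boundary and $\bigcup_j\Omega_j=M$, let $K_j$ be the Dirichlet heat kernel of $\Omega_j$ from the previous paragraph, and extend $K_j$ by $0$ off $\Omega_j$. Comparing $K_j$ and $K_{j+1}$ on $\Omega_j$ by the maximum principle (their difference has zero initial data and is $\ge0$ on $\partial\Omega_j\times(0,\infty)$) gives $0\le K_j\le K_{j+1}$, so $K:=\lim_j K_j$ exists pointwise; interior parabolic (Schauder) estimates, uniform in $j$ on compact subsets, show the convergence is locally smooth, so $K$ is a smooth positive fundamental solution. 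Minimality is the same comparison against an arbitrary positive fundamental solution $\tilde K$: on $\Omega_j$ one has $\tilde K\ge0=K_j$ laterally and equal initial data, hence $\tilde K\ge K_j$, and letting $j\to\infty$ gives $\tilde K\ge K$; in particular the minimal solution is unique. Note also that $\int_M K(x,\tilde x;t)\,d\tilde x=\lim_j\int_{\Omega_j}K_j\le1$ automatically, since each $K_j$ is sub-Markovian.

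The one genuinely global point — and the step I expect to be the main obstacle, since it does not follow from local elliptic theory — is stochastic completeness, i.e.\ promoting $\int_M K\,d\tilde x\le1$ to an equality. When $M$ is closed the constant function $1$ is an eigenfunction with eigenvalue $0$, so $\int_M K(x,\tilde x;t)\,d\tilde x\equiv1$. When $M$ is complete with $\mathrm{Ric}\ge-(m-1)\kappa^2 g$, the Bishop--Gromov comparison theorem bounds the volume growth by $\mathrm{vol}\,B(x,r)\le C e^{(m-1)\kappa r}$, so that $\log\mathrm{vol}\,B(x,r)=O(r)$; Grigor'yan's volume test then applies, since $\displaystyle\int^{\infty}\frac{r\,dr}{\log\mathrm{vol}\,B(x,r)}=\infty$. (Alternatively, the Li--Yau parabolic gradient estimate under the same curvature hypothesis yields a lower bound on $K$ that integrates to $1$.) With stochastic completeness in hand, all remaining claims — smoothness, the heat equation, the Dirichlet boundary behaviour in part (2), positivity, minimality, and uniqueness — have been obtained above from the maximum principle and spectral theory, completing the argument.
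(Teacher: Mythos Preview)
Your argument is essentially correct and follows the standard route: build Dirichlet heat kernels on a compact exhaustion via the spectral expansion, pass to the monotone limit using the parabolic maximum principle to get the minimal positive fundamental solution, and then invoke Bishop--Gromov volume comparison together with Grigor'yan's volume test (or, equivalently, Yau's argument via the Li--Yau gradient estimate) to obtain stochastic completeness under a lower Ricci bound.

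However, you should be aware that the paper does \emph{not} actually prove Theorem~\ref{thm-1.1}. It is stated as background, with the proof outsourced entirely to the literature: the sentence immediately preceding the theorem reads ``These are summarized in the following result (see Chapter VII of \cite{C} and the references therein).'' So there is no ``paper's own proof'' to compare against; the result is simply quoted. What you have written is, in outline, exactly the construction one finds in the cited sources (Chavel, and behind that Dodziuk, Yau, Grigor'yan), so in that sense your proposal matches the intended argument --- it just supplies details the authors deliberately left to the references.
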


Let $\rho:M \rightarrow \mathbb R$ be the specific heat of $M$; we
suppose that $\rho$ is continuous. Following \cite{B12} we
define the {\it heat content of $\Omega$ in $M$} by setting:
\begin{equation}\label{eqn-1.c}
\beta_\Omega(\phi,\rho,\Delta_g)(t):=\int_{\Omega}\int_{\Omega}K(x,\tilde
x;t)\phi(x)\rho(\tilde x)dxd\tilde x.
\end{equation}
Let $\phi$ and $\rho$ be smooth on $\Omega$ and let $(M,g)$ satisfy one of
the conditions described in Section~\ref{sect-1.1}.
Theorem~\ref{thm-1.2} and Theorem~\ref{thm-1.3} below show there exists a
complete asymptotic series
\begin{eqnarray*}
\beta_\Omega(\phi,\rho,\Delta_g)(t)&\sim&\sum_{n=0}^\infty
t^{n}\beta_{n}^\Omega(\phi,\rho,\Delta_g) +\sum_{j=0}^\infty
t^{(1+j)/2}\beta_{j}^{\partial\Omega}(\phi,\rho,\Delta_g)\text{ as }t\rightarrow0^+,
\end{eqnarray*}
where $\beta_{n}^\Omega$ and $\beta_{j}^{\partial\Omega}$ are the
integrals of certain locally computable invariants over $\Omega$
and $\partial\Omega$, respectively. We extend $\phi$ by $0$ on
$M\setminus \Omega$, and note that $\phi$ may be discontinuous on
all or part of $\partial \Omega$. In that case the initial
condition is satisfied for $x\in M\setminus \partial \Omega$.

The study of the {\it heat content of $\Omega$ in $\mathbb
R^m$} was initiated in \cite{P}. It was shown \cite{MPPP1,MPPP2,P}
that if $\Omega$ is an open set in $\mathbb R^m$ with finite
Lebesgue measure $|\Omega|$ and with finite perimeter $\mathcal
P(\Omega)$ then
$$
\mathcal P(\Omega)=\lim_{t\rightarrow
0}\left(\frac{\pi}{t}\right)^{1/2}\iint_{\Omega \times (\mathbb
R^m\setminus\Omega)}K(x,\tilde x;t)dxd\tilde x,
$$
where $K(x,\tilde x;t)=(4\pi t)^{-m/2}e^{-|x-\tilde x|^2/(4t)}$ is
the heat kernel for $\mathbb R^m$. It immediately follows that
\begin{equation}\label{eqn-1.d}
\beta_\Omega(1,1,\Delta_{g_e})(t)=|\Omega|-\pi^{-1/2}\mathcal
P(\Omega)t^{1/2}+o(t^{1/2}), \ t\downarrow 0,
\end{equation}
where $|\Omega|=\int_{\Omega}1dx.$ We note that the main
contribution beyond the constant term $|\Omega|$ in
Equation~\eqref{eqn-1.d} comes from localization near $\partial \Omega$.
We shall see presently in Theorem~\ref{thm-5.1} that only the geometry
near $\Omega$ plays a role in the heat
content modulo an exponentially small error in $t^{-1}$ as
$t\downarrow0$ and which therefore plays no role in the asymptotic
series. Even though the $t\downarrow 0$ behaviour of the heat
kernel is known for general $(M,g)$ \cite{YK}, this explicit
asymptotic behaviour does not give much insight, and is not
helpful in the determination of the locally computable invariants of
$\Omega$ and of $\partial \Omega$ respectively.

We must employ a more general formalism; even if we were only interested
in the scalar Laplacian, it is a facet of the ``method of universal examples" that one
must examine this more general framework. Let $(M,g)$ be as described
in Section~\ref{sect-1.1}. Let $D_M$ be an
operator of Laplace type on a smooth vector bundle $V$ over a
complete Riemannian manifold $(M,g)$. Let $\Omega$ be a compact
manifold in $M$ with smooth boundary. Let $\phi\in
L^1(V|_{\Omega})$ represent the initial temperature, and let
$\rho\in L^1(V^*|_{\Omega})$ represent the specific heat. As above
we extend $\phi$ and $\rho$ to $M$ to be zero on $\Omega^c$, and
we denote the resulting extensions by $\phi_\Omega$ and
$\rho_\Omega$ to emphasize that they are supported on $\Omega$.
Similarly to Equation~\eqref{eqn-1.c} we define the {\it heat content of
$\Omega$ in $M$} by
$$\beta_\Omega(\phi,\rho,{ D_M})(t)=\beta_M(\phi_{\Omega},\rho_{\Omega},D_M)(t)=\int_\Omega\int_\Omega
\langle K(x,\tilde x;t)\phi(x),\rho(\tilde x)\rangle dxd\tilde
x.$$ where $\langle\cdot,\cdot\rangle$ denotes the natural pairing
between $V$ and $V^*$. As above we shall suppose that both
$\rho$ and $\phi$ are smooth on the interior of $\Omega$. However, we obtain additional
information by permitting $\phi$ to have a controlled singularity
near $\partial\Omega$. If $r$ is the geodesic distance to the
boundary, we shall suppose that $r^{\alpha}\phi$ is smooth near
the boundary for some fixed complex number $\alpha$. We shall
always assume $\Re(\alpha)<1$ to ensure that $\phi\in
L^1(\Omega)$. The parameter $\alpha$ controls the blow up (if
$\Re(\alpha)>0$) or decay (if $\Re(\alpha)<0$) of $\phi$ near the
boundary.
We permit $\alpha$ to be complex. Although this has
no physical significance, it is useful in analytic continuation
arguments as we shall see in the proof of Lemma~\ref{lem-3.1}. Let
$\mathcal{K}_\alpha=\mathcal{K}_\alpha(V)$ denote the resulting
space of functions. We will establish the following result in Section~\ref{sect-2}:
\begin{theorem}\label{thm-1.2}
Adopt the notation established above. Let $M$ be a compact Riemannian
manifold without boundary. There is a complete
asymptotic series as $t\downarrow0$ of the form:
\begin{eqnarray*}
\beta_\Omega(\phi,\rho,D_M)(t)&\sim&\sum_{n=0}^\infty
t^{n}\beta_{n}^\Omega(\phi,\rho,D_M) +\sum_{j=0}^\infty
t^{(1+j-\alpha)/2}\beta_{j,\alpha}^{\partial\Omega}(\phi,\rho,D_M),
\end{eqnarray*}
where $\beta_{n}^\Omega$ and $\beta_{j,\alpha}^{\partial\Omega}$
are integrals of certain locally computable invariants over
$\Omega$ and $\partial\Omega$, respectively.
\end{theorem}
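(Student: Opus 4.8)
\emph{Proof sketch.} The plan is to split the heat content into a contribution from the interior of $\Omega$, handled by the classical short--time heat expansion, and a contribution from a collar of $\partial\Omega$, handled by rescaling the normal variable by $\sqrt t$ in a boundary--adapted parametrix for $e^{-tD_M}$; localization modulo $O(e^{-c/t})$ is what makes the splitting legitimate.

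Write $\beta_\Omega(\phi,\rho,D_M)(t)=\int_\Omega\langle u(x;t),\rho(x)\rangle\,dx$ with $u(x;t)=(e^{-tD_M}\phi_\Omega)(x)$. Since $M$ is compact without boundary, the heat kernel of $D_M$ satisfies the off--diagonal bound $|K(x,\tilde x;t)|\le Ct^{-m/2}e^{-d(x,\tilde x)^2/(5t)}$, so --- this is the phenomenon recorded in Theorem~\ref{thm-5.1} --- $u(x;t)$ is determined modulo $O(e^{-c/t})$ by the restriction of $\phi_\Omega$ to a fixed small neighborhood of $x$, and we may replace $K$ by any parametrix agreeing with it to infinite order near the diagonal. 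Fix $\varepsilon>0$ below the injectivity radius, realize a collar $\mathcal C=\partial\Omega\times[0,2\varepsilon)$ by the normal exponential map with Fermi coordinates $(y,r)$, pick $\chi\in C^\infty$ equal to $1$ for $r\le\varepsilon$ and supported in $r<2\varepsilon$, and split $\phi=\chi\phi+(1-\chi)\phi$. By bilinearity $\beta_\Omega(\phi,\rho,D_M)=\beta_\Omega(\chi\phi,\rho,D_M)+\beta_\Omega((1-\chi)\phi,\rho,D_M)$. The term $(1-\chi)\phi$ is smooth and compactly supported in the interior of $\Omega$, where $\rho_\Omega$ agrees with a smooth section, so the formal expansion $e^{-tD_M}\sim\sum_k\tfrac{(-t)^k}{k!}D_M^k$ (valid on smooth compactly supported sections modulo $O(e^{-c/t})$) gives $\beta_\Omega((1-\chi)\phi,\rho,D_M)\sim\sum_{n\ge0}t^n\tfrac{(-1)^n}{n!}\int_\Omega\langle(1-\chi)\phi,(D_M^*)^n\rho\rangle$. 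Combined with the integer--power terms thrown off by the collar analysis below, these assemble into $\sum_{n\ge0}t^n\beta_n^\Omega$ with $\beta_n^\Omega=\tfrac{(-1)^n}{n!}\int_\Omega\langle\phi,(D_M^*)^n\rho\rangle$, an absolutely convergent integral over $\Omega$ of a local invariant since $\Re(\alpha)<1$ and $(D_M^*)^n\rho$ is smooth up to $\partial\Omega$.

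The heart of the matter is $\beta_\Omega(\chi\phi,\rho,D_M)$, which I would analyze in the collar. There $\chi\phi=r^{-\alpha}\Psi(y,r)$ with $\Psi$ smooth, and I would build a parametrix for $e^{-tD_M}$ on $\mathcal C$ by the usual Levi iteration anchored to the constant--coefficient half--space model, so that in the rescaled variable $r=\sqrt t\,s$ its leading piece is the product of the one--dimensional free heat kernel in $(r,\tilde r)$ with the tangential heat kernel of the operator induced on $\partial\Omega$, while the successive corrections carry extra positive integer powers of $t$ (from the symbol expansion of $D_M$) and extra positive powers of $r$ and $\tilde r$ (from Taylor expansion of the metric, the endomorphism term, and the transition data off $\partial\Omega$). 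Substituting $r=\sqrt t\,s$, $\tilde r=\sqrt t\,\tilde s$ turns $r^{-\alpha}$ into $t^{-\alpha/2}s^{-\alpha}$ and the normal volume factors into $t\,ds\,d\tilde s$, and Taylor expanding $\Psi$, $\rho$ and the parametrix coefficients about $\partial\Omega$ in powers of $\sqrt t$ reduces everything to model integrals $\int_0^\infty\!\!\int_0^\infty e^{-(s-\tilde s)^2/4}s^{-\alpha}P(s,\tilde s)\,ds\,d\tilde s$ with $P$ polynomial; these converge precisely because $\Re(\alpha)<1$ and evaluate to explicit $\Gamma$--function expressions in $\alpha$. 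The net outcome is a contribution $t^{(1-\alpha)/2}\sum_{j\ge0}t^{j/2}(\cdots)=\sum_{j\ge0}t^{(1+j-\alpha)/2}\beta_{j,\alpha}^{\partial\Omega}$, where each $\beta_{j,\alpha}^{\partial\Omega}$ is the integral over $\partial\Omega$ of a local invariant built from the jets of $D_M$, $\Psi$ and $\rho$ along $\partial\Omega$ with $\alpha$--dependent universal coefficients, together with extra integer powers of $t$ coming from the large--$s$ portion of the $s$--integration, which matches the interior expansion and is absorbed into the $\beta_n^\Omega$ above.

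The step I expect to be the main obstacle is the rigorous justification that these formal manipulations yield a genuine asymptotic series, i.e.\ uniform remainder bounds after subtracting partial sums: one must control the Levi--parametrix error kernel applied to the conormal datum $r^{-\alpha}\Psi\,\mathbf 1_{r>0}$, whose singularity lies exactly on the boundary of the region of integration, and show that the $\sqrt t$--rescaling is legitimate uniformly down to $r=0$. Here $\Re(\alpha)<1$ is essential for integrability, and it is convenient to treat all quantities as holomorphic in $\alpha$ on $\{\,\Re(\alpha)<1\,\}$, to establish the identities first for $\Re(\alpha)$ very negative, and then to continue analytically in $\alpha$, exactly as in the proof of Lemma~\ref{lem-3.1}. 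Tracking the locality of all parametrix coefficients and Taylor data throughout shows that $\beta_n^\Omega$ and $\beta_{j,\alpha}^{\partial\Omega}$ are integrals of locally computable invariants over $\Omega$ and $\partial\Omega$ respectively, which is the assertion of the theorem.
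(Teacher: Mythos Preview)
Your overall architecture --- localize via off-diagonal decay, treat the interior by the Taylor expansion of the semigroup, and treat a collar by a parametrix plus $\sqrt t$-rescaling of the normal variable --- is the same as the paper's. The paper implements the parametrix step differently, via Seeley's symbol calculus for the resolvent and a contour integral to pass to the heat kernel, rather than a Levi iteration; the payoff is that the remainder bounds after truncating the symbol expansion are already available as Sobolev-to-Sobolev (hence $C^k$) estimates, so no separate work is needed to justify applying the approximate kernel to the singular datum $r^{-\alpha}\Psi\,\mathbf 1_{r>0}$.

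There is, however, a genuine gap in your collar analysis. The ``model integrals'' you write down,
\[
\int_0^\infty\!\!\int_0^\infty e^{-(s-\tilde s)^2/4}\,s^{-\alpha}P(s,\tilde s)\,ds\,d\tilde s,
\]
do \emph{not} converge: the condition $\Re(\alpha)<1$ only controls $s\to 0$, while along the diagonal $s=\tilde s\to\infty$ the Gaussian weight is identically $1$ and you are integrating a polynomial. Saying that the ``large-$s$ portion'' contributes integer powers of $t$ does not repair this, because the term-by-term rescaled integrals are already divergent before any such separation is made. The mechanism that actually produces the boundary series has a \emph{plus} sign in the exponent: in the normal direction one writes
\[
\int_0^\infty\!\!\int_0^\infty e^{-(r-\tilde r)^2/(4t)}(\cdots)\,d\tilde r\,dr
=\int_0^\infty\!\!\int_{-\infty}^\infty(\cdots)\,d\tilde r\,dr
-\int_0^\infty\!\!\int_{-\infty}^0(\cdots)\,d\tilde r\,dr,
\]
where $\rho$ has been extended smoothly across $\partial\Omega$. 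The first piece is an interior-type integral (Lemma~\ref{lem-2.3}(1) in the paper) and feeds only the $t^n\beta_n^\Omega$ series; in the second piece one substitutes $\tilde r\mapsto -\tilde r$ and only then rescales, obtaining the convergent model integrals
\[
\int_0^\infty\!\!\int_0^\infty e^{-(s+\tilde s)^2/4}\,s^{-\alpha}\tilde s^{\,\ell}\,ds\,d\tilde s,
\]
exactly as in Lemma~\ref{lem-1.4} and Lemma~\ref{lem-2.3}(2). Without this splitting your rescaling step is formal and the claimed asymptotic series does not follow. Once you insert this correction, your outline matches the paper's proof in Section~\ref{sect-2}.
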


In Section~\ref{sect-5}, we will use Theorem~\ref{thm-1.2} to establish the following result:
\begin{theorem}\label{thm-1.3}
 Let $(M,g)=(\mathbb{R}^m,g_e)$ or let $M$ be a compact {submanifold} of $\mathbb{R}^m$
 with smooth boundary and $g=g_e|_M$. There is a complete
asymptotic series as $t\downarrow0$ of the form:
\begin{eqnarray*}
\beta_\Omega(\phi,\rho,D_M)(t)&\sim&\sum_{n=0}^\infty
t^{n}\beta_{n}^\Omega(\phi,\rho,D_M) +\sum_{j=0}^\infty
t^{(1+j-\alpha)/2}\beta_{j,\alpha}^{\partial\Omega}(\phi,\rho,D_M),
\end{eqnarray*}
where $\beta_{n}^\Omega$ and $\beta_{j,\alpha}^{\partial\Omega}$
are integrals of certain locally computable invariants over
$\Omega$ and $\partial\Omega$, respectively.
\end{theorem}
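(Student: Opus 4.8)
The plan is to deduce Theorem~\ref{thm-1.3} from Theorem~\ref{thm-1.2} by a localization (``not feeling the boundary'') argument: the heat content $\beta_\Omega(\phi,\rho,D_M)(t)$ depends on the heat kernel $K$ only through its restriction to $\bar\Omega\times\bar\Omega$, and this restriction is, modulo an error of order $e^{-c/t}$, insensitive to the global structure of $M$; since such an error has a vanishing asymptotic series, it contributes nothing to the expansion.

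First I would fix $\varepsilon>0$ so small that the closed $\varepsilon$-neighborhood $\mathcal O_\varepsilon$ of $\bar\Omega$ in $M$ is compact and, in case (3), disjoint from $\partial M$, and then embed $(\mathcal O_\varepsilon,g,V,D_M)$ isometrically into a \emph{closed} Riemannian manifold $\hat M$ carrying a bundle $\hat V$ and an operator $\hat D$ of Laplace type that agree with the given data on $\mathcal O_{\varepsilon/2}$. This is a routine construction: double $\mathcal O_\varepsilon$ across a smooth hypersurface separating $\bar\Omega$ from the outer part of $\mathcal O_\varepsilon$, or cap off and extend the symbol of $D_M$ arbitrarily outside $\mathcal O_{\varepsilon/2}$. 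In case (2) one first compares the Euclidean heat kernel on $\mathbb R^m$ with the Dirichlet heat kernel of a large ball $B\supset\mathcal O_\varepsilon$ by means of the explicit Gaussian, which reduces case (2) to the situation of case (3).

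Next I would compare heat kernels. Let $\hat K$ be the heat kernel of $\hat D$ on $\hat M$ and let $K$ be the (Dirichlet, in case (3)) heat kernel of $D_M$ on $M$. Using Duhamel's formula with a cutoff $\chi\equiv1$ on $\mathcal O_{\varepsilon/4}$ and supported in $\mathcal O_{\varepsilon/2}$, the difference $K-\hat K$ on $\bar\Omega\times\bar\Omega$ is expressed through the two heat semigroups acting on terms supported in $\mathcal O_{\varepsilon/2}\setminus\mathcal O_{\varepsilon/4}$, i.e.\ at distance $\ge\varepsilon/4$ from $\bar\Omega$; the standard Gaussian off-diagonal upper bound for heat kernels of operators of Laplace type (equivalently, finite propagation speed for the associated hyperbolic equation) then gives
$$\sup_{x,\tilde x\in\bar\Omega}\|K(x,\tilde x;t)-\hat K(x,\tilde x;t)\|\le c_1e^{-c_2/t},\qquad 0<t\le1,$$
with $c_1,c_2>0$ depending only on $\varepsilon$ and on bounds for the geometry and the symbol of $D_M$ near $\bar\Omega$. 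Since $\phi\in\mathcal K_\alpha(V)\subset L^1(V|_\Omega)$ and $\rho\in L^1(V^*|_\Omega)$, integrating this estimate yields
$$\bigl|\beta_\Omega(\phi,\rho,D_M)(t)-\beta_\Omega(\phi,\rho,\hat D)(t)\bigr|\le c_1\|\phi\|_{L^1}\,\|\rho\|_{L^1}\,e^{-c_2/t}=O(t^N)\ \text{for every }N,$$
so the two heat contents have identical asymptotic expansions as $t\downarrow0$.

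Finally, applying Theorem~\ref{thm-1.2} to the closed manifold $\hat M$ gives the asserted expansion for $\beta_\Omega(\phi,\rho,\hat D)(t)$, hence for $\beta_\Omega(\phi,\rho,D_M)(t)$. The coefficients $\beta_n^\Omega$ and $\beta_{j,\alpha}^{\partial\Omega}$ produced there are integrals over $\Omega$ and $\partial\Omega$ of universal local expressions in the jets of $g$ and of the symbol of $\hat D$ (and of $\phi$, $\rho$); because $\hat D=D_M$ and $\hat g=g$ on a neighborhood of $\bar\Omega$, these integrands coincide with those determined by the original data $(M,g,D_M,\Omega)$, so the expansion is expressed through locally computable invariants of $(M,g,D_M,\Omega)$ as claimed. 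I expect the only genuine work to lie in the heat-kernel comparison: one must arrange the cutoff so that every commutator term arising in Duhamel's formula is supported at a fixed positive distance from $\bar\Omega$, so that the Gaussian estimate produces a true $e^{-c/t}$ gain uniformly on $\bar\Omega\times\bar\Omega$, and one must check that in case (3) the minimality/Dirichlet character of $K$ is compatible with this comparison.
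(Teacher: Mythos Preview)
Your proposal is correct and follows the same overall scheme as the paper: localize to a compact neighbourhood of $\bar\Omega$, embed that neighbourhood in a \emph{closed} manifold, and invoke Theorem~\ref{thm-1.2}. The tools you use for the two key steps, however, are genuinely different from the paper's. For the embedding, the paper exploits the specific Euclidean hypothesis: any compact $\tilde M\subset\mathbb R^m$ lies in some ball $B_n(0)$ and hence embeds isometrically in the flat torus $\mathbb R^m/(2n\mathbb Z)^m$; no doubling, smoothing, or symbol extension is needed. For the localization, the paper does not compare heat kernels pointwise via Duhamel and Gaussian off-diagonal bounds; instead it compares heat \emph{contents} directly (Theorem~\ref{thm-5.1}) using domain monotonicity of the Dirichlet heat kernel together with a probabilistic exit-time estimate on complete manifolds of non-negative Ricci curvature, applied once with ambient $M$ and once with ambient torus. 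Your route is more general---it covers arbitrary operators of Laplace type on vector bundles, whereas Theorem~\ref{thm-5.1} as stated treats only the scalar Laplacian---at the price of invoking heavier analytic machinery (Gaussian upper bounds and a smooth extension of $(g,V,D_M)$ to a closed manifold). The paper's route is more elementary and makes the flat setting do the work, but in exchange is tied to the scalar case and to the Euclidean hypothesis.
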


\subsection{1-dimensional geometry} What happens on the line is in many ways crucial to our analysis as we shall
``bootstrap" our way from that setting to the higher dimensional setting. Let $S^1=[0,2\pi]$ where we identify $0\sim2\pi$.
Let $\Omega=[0,\pi]\subset S^1$. Let $V=V^*$ be the trivial bundles and let $D=-\partial_x^2$.
Near the boundary point $x=0$, we may expand $\phi$ and $\rho$ in modified Taylor series for $x>0$:
$$\phi(x)\sim x^{-\alpha}\{\phi_0+\phi_1x+\phi_2x^2+...\}\text{ and }\rho(x)\sim\rho_0+\rho_1x+\rho_2x^2+... .$$
There is a similar expansion near $x=\pi$ as well; $\partial\Omega=\{0,\pi\}$ and the boundary $dy$
is simply counting measure in this instance.
We will establish the following result in Section~\ref{sect-3}:

\begin{lemma}\label{lem-1.4}
With the notation established above we may take
$$\beta_n^\Omega=\frac{(-1)^n}{n!}\int_\Omega \phi\cdot D^n\rho
dx,\text{ and }\beta_{j,\alpha}^{\partial\Omega}
=\sum_{k+\ell=j}\int_{\partial\Omega}c_{k,\ell,\alpha}\phi_k\rho_\ell
dy,$$ where $dy$ is the Riemannian volume element on the boundary
of $\partial \Omega$, and where
$$
c_{k,\ell,\alpha}=(-1)^{\ell+1}\frac1{\sqrt{4\pi}}\int_{0}^\infty\int_{0}^\infty e^{-(u+\tilde u)^2/4}u^{k-\alpha}\tilde u^\ell
dud\tilde u\ .$$
\end{lemma}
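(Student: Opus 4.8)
The plan is to localize the heat flow near the two boundary points $\{0,\pi\}$ and to split the heat semigroup on $S^1$ into an ``interior'' piece, which will produce the volume terms $\beta_n^\Omega$, and two ``boundary layer'' pieces, which will produce the boundary terms $\beta_{j,\alpha}^{\partial\Omega}$.

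First I would pass from the circle to the line. Writing the heat kernel of $S^1$ by the method of images, $K_{S^1}(x,\tilde x;t)=\sum_{n\in\mathbb Z}G(x-\tilde x-2\pi n;t)$ with $G(z;t)=(4\pi t)^{-1/2}e^{-z^2/(4t)}$, one checks that for $x,\tilde x\in[0,\pi]$ every term with $n\ne 0$ is $O(e^{-c/t})$ uniformly; since $\phi\in L^1(\Omega)$ (this is where $\Re(\alpha)<1$ enters) and $\rho$ is bounded, $\beta_\Omega(\phi,\rho,D)(t)=\int_0^\pi\int_0^\pi G(x-\tilde x;t)\phi(x)\rho(\tilde x)\,d\tilde x\,dx+O(e^{-c/t})$. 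Now fix a compactly supported $\tilde\rho\in C^\infty(\mathbb R)$ with $\tilde\rho=\rho$ on $[0,\pi]$. The key step is the decomposition $\rho\,\mathbf 1_{[0,\pi]}=\tilde\rho-\tilde\rho\,\mathbf 1_{(-\infty,0)}-\tilde\rho\,\mathbf 1_{(\pi,\infty)}$, which, after applying the free heat operator $e^{t\partial_x^2}$ and pairing against $\phi$, rewrites the double integral as $A(t)-B(t)-C(t)$ where $A(t)=\int_0^\pi\phi\,(e^{t\partial_x^2}\tilde\rho)\,dx$, $B(t)=\int_0^\pi\phi(x)\int_0^\infty G(x+s;t)\,\tilde\rho(-s)\,ds\,dx$, and $C(t)$ is the analogous term at the other endpoint with $G((\pi-x)+s;t)\,\tilde\rho(\pi+s)$. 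The essential feature, responsible for the Gaussian $e^{-(u+\tilde u)^2/4}$ in the statement, is that in $B(t)$ and $C(t)$ the point $x\in\Omega$ at distance $r$ from the boundary is paired with a point at distance $s$ on the other side, so the heat kernel argument is the sum $r+s$.

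For $A(t)$: since $\tilde\rho\in C^\infty_c$, $e^{t\partial_x^2}\tilde\rho=\sum_{n=0}^N\frac{t^n}{n!}\tilde\rho^{(2n)}+O(t^{N+1})$ uniformly on $\mathbb R$, and using $\partial_x^{2n}=(-1)^nD^n$ together with $\tilde\rho=\rho$ on $\Omega$ gives $A(t)\sim\sum_n\frac{(-1)^n}{n!}t^n\int_\Omega\phi\,D^n\rho\,dx=\sum_n t^n\beta_n^\Omega$. For $B(t)$: insert the modified Taylor expansion $\phi(x)\sim x^{-\alpha}\sum_k\phi_k x^k$ and the ordinary Taylor expansion $\tilde\rho(-s)\sim\sum_\ell(-1)^\ell\rho_\ell s^\ell$, then rescale $x=\sqrt t\,u$, $s=\sqrt t\,\tilde u$; the model integral becomes $\int_0^\pi\int_0^\infty x^{k-\alpha}G(x+s;t)s^\ell\,ds\,dx=t^{(k+\ell+1-\alpha)/2}\frac1{\sqrt{4\pi}}\int_0^\infty\int_0^\infty u^{k-\alpha}\tilde u^\ell e^{-(u+\tilde u)^2/4}\,d\tilde u\,du+O(e^{-c/t})$, the error coming from replacing the upper limit $\pi/\sqrt t$ by $\infty$. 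Carrying the sign $-(-1)^\ell=(-1)^{\ell+1}$ from the decomposition, the coefficient of $t^{(1+j-\alpha)/2}$ in $-B(t)$ is exactly $\sum_{k+\ell=j}c_{k,\ell,\alpha}\phi_k\rho_\ell$ at the boundary point $0$; $-C(t)$ gives the same expression in the boundary data at $\pi$, and since $dy$ is counting measure on $\partial\Omega=\{0,\pi\}$ we obtain $-B(t)-C(t)\sim\sum_j t^{(1+j-\alpha)/2}\beta_{j,\alpha}^{\partial\Omega}$.

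The only genuine obstacle is the remainder bookkeeping needed to turn these formal manipulations into a true asymptotic expansion: one truncates the two Taylor series at orders $K,L$ and must bound the three leftover pieces — polynomial times remainder, remainder times polynomial, and remainder times remainder — by powers of $t$ tending to $\infty$ with $K,L$. After the $\sqrt t$ rescaling this reduces to the integrability of $\int_0^\infty u^{-\Re(\alpha)}e^{-u^2/4}\,du$ for $\Re(\alpha)<1$ and to the decay $\int_0^\infty e^{-(u+\tilde u)^2/4}\tilde u^\ell\,d\tilde u\lesssim e^{-u^2/4}u^{-\ell-1}$ for large $u$, which also shows the integral defining $c_{k,\ell,\alpha}$ converges. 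A final remark checks that $A$, $B$, $C$, and hence the whole expansion, depend only on $\rho|_\Omega$ and not on the chosen extension $\tilde\rho$, so the formulas for $\beta_n^\Omega$ and $\beta_{j,\alpha}^{\partial\Omega}$ are well defined.
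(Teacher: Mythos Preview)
Your argument is correct and the core computation is exactly the paper's: your interior piece $A(t)$ is the paper's $f_1(t)$ (convolve the smooth extension of $\rho$ with the Gaussian over all of $\mathbb R$, change variables $\tilde x=x+u$, and Taylor expand to produce $\sum_n\frac{(-1)^n}{n!}t^n\int_\Omega\phi\,D^n\rho$), and your boundary piece $B(t)$ is the paper's $f_2(t)$ after the substitution $s=-\tilde x$, with the same $\sqrt t$ rescaling and the same sign $-(-1)^\ell=(-1)^{\ell+1}$ yielding $c_{k,\ell,\alpha}$. The one difference is how the reduction from $S^1$ to $\mathbb R$ is carried out: the paper invokes the pseudo-differential machinery of Section~2 (here $r_n=0$ for $n\ge1$, so $K_0$ is the Euclidean Gaussian and the kernel estimate handles the discrepancy with $K_{S^1}$), whereas you use the explicit method of images, which is more elementary and makes the proof self-contained without appeal to Section~2.
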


\subsection{Local invariants}
We return to the general setting.
By considering the case in which $\Omega=M$,
we see that $\beta_n^\Omega$ can be taken to be
$$\beta_{n}^\Omega(\phi,\rho,{ D_M})=\frac{(-1)^n}{n!}\int_\Omega\langle\phi,
\tilde D_M^n\rho\rangle dx,$$ where $\tilde D_M^n$ is the dual
operator on $V^*$. We shall not differentiate $\phi$ on the
interior owing to the lack of smoothness in $\phi$ as we approach
the boundary. This is a crucial point: were we to examine the
doubly singular setting, we would need to regularize the interior
integrals. Thus attention is focused on the boundary invariants
$\beta_{j,\alpha}^{\partial\Omega}$; these are only defined up to
divergence terms.

\subsection{A Bochner formalism}
Although in Sections~\ref{sect-2} and \ref{sect-3} we will use a
coordinate formalism for describing the invariants
$\beta_{j,\alpha}^{\partial\Omega}$, it is useful to introduce an
invariant tensorial formalism at this point. There is a unique
connection $\nabla$ on $V$ and a unique endomorphism $E$ of $V$ so
that we may express $ D_M$ using a Bochner formalism:
$$ D_M\phi=-\{g^{ij}\phi_{;ij}+E\phi\}\,.$$
Here $\phi_{;ij}$ denotes the components of the second covariant
derivative $\nabla^2\phi$ of $\phi$. We adopt the {\it Einstein}
convention and sum over repeated indices (see Gilkey \cite{G94}
for details). Let $\Gamma$ denote the Christoffel symbols of the
metric $g$. If we express $ D_M$ in a coordinate system in the
form:
\begin{equation}\label{eqn-1.e}
\displaystyle  D_M=-\{ g^{ij}\partial_{x_i}
\partial_{x_j}+A^i\partial_{x_i}+B\},
\end{equation}
then the connection $1$-form $\omega$ of
$\nabla$ and the endomorphism $E$ are given by
$$
\omega_i=\textstyle{\textstyle\frac12}(g_{ij}A^j
+g^{kl}\Gamma_{kli}\operatorname{Id})\text{ and }
E=B-g^{ij}(\partial_{x_j}\omega_{i}+\omega_{i}\omega_{j}
 -\omega_{k}\Gamma_{ij}{}^k)\,.
$$

The question always arises as to why it is necessary (or desirable) to consider these more
general operators of Laplace type when in practice one is usually only interested in the scalar
Laplacian. In Section~\ref{sect-3.4}, we will
evaluate the coefficients of the terms involving the second fundamental form $L_{ab}$
and the Ricci curvature $\operatorname{Ric}_{mm}$ in $\beta_{1,\alpha}^{\partial\Omega}$ and
$\beta_{2,\alpha}^{\partial\Omega}$. We will use warped products and operators which are
not the scalar Laplacian. The fact that we are working with quite general operators
will be crucial. It is typical in this subject that to obtain formulas for the scalar Laplacian,
one must invoke the more general formalism and derive formulas in a very general context.

Let $\nabla^*$ be the dual connection on $V^*$, and let the
associated connection $1$-form be given by $-\omega_i^*$. We let
indices $\{i,j,k,l\}$ range from $1$ to $m$. Let $R_{ijkl}$ be the
components of the metric tensor,
 let $\operatorname{Ric}_{ij}$ be the components of the Ricci tensor, and let $\tau$ be the scalar curvature.
Let indices $\{a,b,c\}$ range from $1$ to $m-1$. Near the boundary, we choose a local orthonormal frame
$\{e_i\}$ for the tangent bundle so that $e_m$ is the inward unit geodesic
normal.
Let $L_{ab}$ denote the components of the second fundamental form. Let
$$\phi_j:=\nabla_{e_m}^j\phi|_{\partial M}\text{ and }
 \rho_j:=(\nabla^*_{e_m})^j\rho|_{\partial M}\,.$$
One may use dimensional analysis to see that
$\beta_{j,\alpha}^{\partial\Omega}$ is homogeneous of order $j$ in
the derivatives of the structures involved. This leads to the
following observation.
\begin{lemma}\label{lem-1.5}
There exist universal constants
$\varepsilon_{\nu,\alpha}$ which depend holomorphically on $\alpha$ so that:
\medbreak
$\beta_{0,\alpha}^{\partial\Omega}
 (\phi,\rho, D_M)=\displaystyle\int_{\partial\Omega}
 \varepsilon_{0,\alpha}\langle\phi_0,\rho_0\rangle dy$.
\medbreak$ \beta_{1,\alpha}^{\partial\Omega}(\phi,\rho,
D_M)=\displaystyle\int_{\partial\Omega}\left\{\varepsilon_{1,\alpha}\langle\phi_1,\rho_0\rangle
+\varepsilon_{2,\alpha}\langle
L_{aa}\phi_0,\rho_0\rangle+\varepsilon_{3,\alpha}\langle\phi_0,\rho_1\rangle\right\}
dy$. \medbreak $\beta_{2,\alpha}^{\partial\Omega}(\phi,\rho,
D_M)=\displaystyle\int_{\partial\Omega}\{\varepsilon_{4,\alpha}\langle\phi_2,\rho_0\rangle
+\varepsilon_{5,\alpha}\langle
L_{aa}\phi_1,\rho_0\rangle+\varepsilon_{6,\alpha}\langle
E\phi_0,\rho_0\rangle$ \medbreak\quad
$+\varepsilon_{7,\alpha}\langle\phi_0,\rho_2\rangle
+\varepsilon_{8,\alpha}\langle
L_{aa}\phi_0,\rho_1\rangle+\varepsilon_{9,\alpha}\langle\operatorname{Ric}_{mm}\phi_0,\rho_0\rangle
+\varepsilon_{10,\alpha}\langle L_{aa}L_{bb}\phi_0,\rho_0\rangle$
\medbreak\quad $+\varepsilon_{11,\alpha}\langle
L_{ab}L_{ab}\phi_0,\rho_0\rangle +\varepsilon_{12,\alpha}\langle
\phi_{0;a},\rho_{0;a}\rangle+\varepsilon_{13,\alpha}\langle\tau\phi_0,\rho_0\rangle
+\varepsilon_{14,\alpha}\langle\phi_1,\rho_1\rangle\}dy$.
\end{lemma}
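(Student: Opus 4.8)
The plan is to argue by invariance theory in the style of Gilkey. By Theorem~\ref{thm-1.2} we may write $\beta_{j,\alpha}^{\partial\Omega}(\phi,\rho,D_M)=\int_{\partial\Omega}\mathcal{E}_{j,\alpha}\,dy$, where $\mathcal{E}_{j,\alpha}$ is a local invariant: a polynomial, produced by the universal symbolic rules of the parametrix construction of Section~\ref{sect-2}, in the jets at a boundary point of the metric $g$, of the connection $\nabla$ (equivalently, of its curvature), of the endomorphism $E$, and of the modified Taylor coefficients $\phi_k$ and $\rho_\ell$. Because $\beta_\Omega(\phi,\rho,D_M)$ is separately linear in $\phi$ and in $\rho$, the integrand $\mathcal{E}_{j,\alpha}$ is bilinear in the jet of $\phi$ and the jet of $\rho$; because the parametrix is constructed symbolically, the constants that occur are independent of $m$, of the bundle, and of the particular $(M,g,\Omega)$ (the method of universal examples).

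Next I would pin down the homogeneity by a rescaling argument. Replacing $g$ by $c^2g$ (keeping $\nabla$ and rescaling $E\mapsto c^{-2}E$) sends $D_M\mapsto c^{-2}D_M$, so that $t\mapsto c^2t$ leaves $e^{-tD_M}$ invariant; this rescaling multiplies the geodesic distance $r$, the volume elements $dx$ and $dy$, the second fundamental form $L_{ab}$, and the modified Taylor coefficients $\phi_k$, $\rho_\ell$ by explicit powers of $c$ determined by $k$, $\ell$, $m$ and $\alpha$. Comparing the two sides of the expansion of Theorem~\ref{thm-1.2} under this rescaling and matching the coefficient of $t^{(1+j-\alpha)/2}$ forces $\mathcal{E}_{j,\alpha}$ to be homogeneous of weight $j$, where each covariant derivative and each $L_{ab}$ counts $1$, the endomorphism $E$ and each curvature tensor counts $2$, and $\phi_k$, $\rho_\ell$ count $k$ and $\ell$ respectively — the uniform factor carried by $\phi$ because of its singular weight $-\alpha$ drops out of this count, so that the surviving bookkeeping is exactly that $(\text{weight of the geometric factor})+k+\ell=j$ in each monomial.

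The last structural input is that the frame near $\partial\Omega$ is determined only up to the action of $O(m-1)$ on the tangential directions $\{e_1,\dots,e_{m-1}\}$, with $e_m$ singled out as the inward geodesic normal. By H.\ Weyl's first fundamental theorem of invariant theory, $\mathcal{E}_{j,\alpha}$ is therefore a linear combination, with coefficients $\varepsilon_{\nu,\alpha}$ depending only on $\alpha$, of fully index‑contracted bilinear monomials of weight $j$ in the jets of $\phi$, $\rho$ and the geometric data. Enumerating these for $j=0,1,2$: one uses the Gauss equation and $\operatorname{Ric}_{aa}+\operatorname{Ric}_{mm}=\tau$ to reduce every ambient curvature scalar to $\operatorname{Ric}_{mm}$, $\tau$, $L_{aa}L_{bb}$ and $L_{ab}L_{ab}$; one notes that the curvature $2$‑form of $\nabla$ is antisymmetric, hence contributes no scalar invariant at weight $2$ (leaving $E$ as the only bundle term there); and one integrates by parts on the closed manifold $\partial\Omega$ to replace $\langle\phi_{0;aa},\rho_0\rangle$ and $\langle\phi_0,\rho_{0;aa}\rangle$ by $-\langle\phi_{0;a},\rho_{0;a}\rangle$ modulo divergences. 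What remains is precisely the three lists displayed in the statement. Finally, holomorphy of each $\varepsilon_{\nu,\alpha}$ in $\alpha$ is inherited from the parametrix construction and the analytic‑continuation argument of Section~\ref{sect-2} (cf.\ the proof of Lemma~\ref{lem-3.1}), in which every ingredient depends holomorphically on $\alpha$.

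The main obstacle is not a single deep point but the combinatorial care required: one must carry out the rescaling weight count correctly, and above all verify that the enumeration of admissible monomials of weight at most $2$ is complete — in particular that no tangential‑derivative term with an uncontracted index slips through, that the two independent second‑fundamental‑form quadratics and the two independent ambient curvature scalars genuinely exhaust the curvature contributions at weight $2$, and that the chosen monomials form a basis modulo divergences rather than merely a spanning set.
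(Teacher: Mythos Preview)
Your proposal is correct and follows essentially the same approach as the paper: the paper states that ``one may use dimensional analysis to see that $\beta_{j,\alpha}^{\partial\Omega}$ is homogeneous of order $j$'' and then omits the details entirely, referring instead to \cite{BGS08} where the analogous Dirichlet result is proved by exactly the invariance-theory argument you outline; holomorphy in $\alpha$ is handled by the remark at the end of Section~\ref{sect-2}, just as you indicate. In fact you have supplied considerably more detail than the paper itself does --- the rescaling weight count, the $O(m-1)$ reduction via Weyl's theorem, and the explicit enumeration of weight-$2$ monomials with the Gauss-equation and integration-by-parts reductions --- all of which is implicit in the paper's one-line appeal to \cite{BGS08}.
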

We omit details in the interests of brevity and refer
instead to the discussion in \cite{BGS08}, where a similar result
is established for the heat content asymptotics which arise from
the Dirichlet realization.

Although we have in principle permitted vector valued operators, this lack of commutativity plays no
role at the $\beta_{2,\alpha}^{\partial\Omega}$ level and thus we restrict henceforth to scalar operators.

\subsection{Normalizing constants}
Following the discussion in \cite{BGS08}, we define:
$$
c_\alpha:=2^{1-\alpha}\Gamma\left(\frac{2-\alpha}2\right)\frac1{\sqrt\pi(\alpha-1)}\,.
$$
Because
$s\Gamma(s)=\Gamma(s+1)$, we have the recursion relations
\begin{equation}\label{eqn-1.f}
c_\alpha=-\frac{\alpha-3}{2(\alpha-1)(\alpha-2)}c_{\alpha-2}\quad\text{and}\quad
c_{\alpha+1}=-\frac{\alpha-2}{2\alpha(\alpha-1)}c_{\alpha-1}\,.
\end{equation}
We set $\alpha=0$ to see:
$$
c_0=-\frac2{\sqrt\pi},\quad c_{-1}=-1,\quad c_{-2}=-\frac8{3\sqrt\pi}\,.
$$

\subsection{Some terms in the asymptotic series} In Section~\ref{sect-3}, we
will establish the following result by determining the universal
constants in Lemma~\ref{lem-1.5}. This together with
Theorem~\ref{thm-1.2} are the two main results of this paper.

\begin{theorem}\label{thm-1.6}
\ \medbreak
$\beta_{0,\alpha}^{\partial\Omega}
 (\phi,\rho,{D_M})=c_\alpha\textstyle\int_{\partial\Omega}
 \XZERO\langle\phi_0,\rho_0\rangle dy$.
\medbreak$ \beta_{1,\alpha}^{\partial\Omega} (\phi,\rho,D_M) =\textstyle
c_{\alpha-1}\int_{\partial\Omega}\left\{\XONE\langle\phi_1,\rho_0\rangle
+\XTWO\langle L_{aa}\phi_0,\rho_0\rangle +\XTHREE
\langle\phi_0,\rho_1\rangle\right\} dy. $\medbreak$
\beta_{2,\alpha}^{\partial\Omega} (\phi,\rho,{D_M})
 =\textstyle c_{\alpha-2}\int_{\partial\Omega}\{\XFOUR
 \langle\phi_2,\rho_0\rangle
+\XFIVE\langle
L_{aa}\phi_1,\rho_0\rangle
 +\XSIX\langle E
 \phi_0,\rho_0\rangle$
 \medbreak\quad
$+\XSEVEN\langle
\phi_0,\rho_2\rangle$
$\XEIGHT\langle
 L_{aa}\phi_0,\rho_1\rangle
 \XNINE\langle
 \operatorname{Ric}_{mm}\phi_0,\rho_0\rangle$
 \medbreak\quad
$ +\XTEN\langle
 L_{aa}L_{bb}\phi_0,\rho_0\rangle
 \XELEVEN\langle
L_{ab}L_{ab}\phi_0,\rho_0\rangle$
 \medbreak\quad
$
\XTWELVE
\langle \phi_{0;a},\rho_{0;a}\rangle
+\XTHIRTEEN
\langle\tau\phi_0,\rho_0\rangle+
\XFOURTEEN
\langle\phi_1,\rho_1\rangle\}dy$.
\end{theorem}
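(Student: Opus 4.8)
The plan is to determine the fourteen universal constants $\varepsilon_{\nu,\alpha}$ of Lemma~\ref{lem-1.5} by exhibiting enough explicit ``universal examples'' in which the heat content asymptotics can be computed directly and which together pin down all the constants. The starting point is the one-dimensional computation of Lemma~\ref{lem-1.4}: taking $\Omega=[0,\pi]\subset S^1$ with $D=-\partial_x^2$ produces, via the explicit constants $c_{k,\ell,\alpha}$, the values of $\varepsilon_{0,\alpha}$, of $\varepsilon_{1,\alpha}$ and $\varepsilon_{3,\alpha}$, and of $\varepsilon_{4,\alpha}$, $\varepsilon_{7,\alpha}$, $\varepsilon_{14,\alpha}$ — that is, all the constants attached to invariants built purely from normal derivatives of $\phi$ and $\rho$, since in dimension one there is no second fundamental form, no $E$, no curvature, and no tangential derivative. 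One evaluates the Gaussian double integrals defining $c_{k,\ell,\alpha}$ (writing them in terms of $\Gamma$-functions and the normalizing constant $c_\alpha$), checks against the stated recursions \eqref{eqn-1.f}, and reads off $\XZERO,\XONE,\XTHREE,\XFOUR,\XSEVEN,\XFOURTEEN$.

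Next I would bring in the remaining geometric invariants by a sequence of product and warped-product examples, exactly as in \cite{BGS08}. To capture $\varepsilon_{2,\alpha}$ and $\varepsilon_{5,\alpha},\varepsilon_{8,\alpha}$ (the $L_{aa}$ terms at levels $1$ and $2$), and $\varepsilon_{6,\alpha}$ (the $E$ term), $\varepsilon_{9,\alpha}$ (the $\operatorname{Ric}_{mm}$ term), and the quadratic curvature terms $\varepsilon_{10,\alpha},\varepsilon_{11,\alpha}$, one uses a domain of the form $\Omega=[0,r_0]\times N$ inside $M=S^1\times N$ for a suitable closed manifold $N$, with operator $D=-\partial_r^2 + D_N$ and metric possibly warped by a function $f(r)$: the second fundamental form of $\{r=0\}$ is then governed by $f'(0)$, the normal Ricci curvature by $f''(0)$, and $E$ can be inserted by adding a potential. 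Because $D_M$ need not be the scalar Laplacian (this is the point emphasized in Section~\ref{sect-3.4}), one has the freedom to turn on $E$ and the tangential curvatures independently, and by differentiating the resulting explicit separated-variable heat content asymptotics in the warping parameters one isolates each constant in turn. The constant $\varepsilon_{12,\alpha}$ (the $\langle\phi_{0;a},\rho_{0;a}\rangle$ term) is obtained by taking $\phi$ and $\rho$ with nontrivial tangential dependence on $N$ — e.g. eigenfunctions of $D_N$ — so that the $t^{(3-\alpha)/2}$ coefficient acquires a contribution proportional to the eigenvalue, which matches $\int_{\partial\Omega}\langle\phi_{0;a},\rho_{0;a}\rangle$ after integration by parts; and $\varepsilon_{13,\alpha}$ is fixed (here to $0$) by comparing the $\tau$ term against a totally geodesic flat product where $\tau=0$ but curvature of $N$ would otherwise enter — more precisely, by the Gauss equation $\tau = \tau_{\partial} + 2\operatorname{Ric}_{mm} - L_{aa}L_{bb} + L_{ab}L_{ab}$ one must be careful to use a basis of invariants in which $\tau$ appears, and a sphere or other constant-curvature example disentangles the purely intrinsic-boundary piece.

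The main obstacle is bookkeeping rather than conceptual: one must make sure the chosen family of universal examples is rich enough to span all fourteen constants simultaneously, and in particular that the quadratic second-fundamental-form invariants $L_{aa}L_{bb}$ and $L_{ab}L_{ab}$ are separated (this requires an example whose boundary has non-umbilic second fundamental form, so a product with a round sphere of varying radius is not by itself enough — one needs something like a product of two spheres or a suitably inhomogeneous warped metric). A second delicate point is the interplay with the divergence ambiguity: the invariants $\beta_{j,\alpha}^{\partial\Omega}$ are only defined modulo total divergences on $\partial\Omega$, so before matching one must fix a normal form for the boundary integrand, and one must check that the tangential-derivative term $\langle\phi_{0;a},\rho_{0;a}\rangle$ and the $E$, $\operatorname{Ric}_{mm}$, $\tau$ terms are handled consistently across examples (integrating by parts on $\partial\Omega$ can trade $\langle\phi_{0;aa},\rho_0\rangle$ for $-\langle\phi_{0;a},\rho_{0;a}\rangle$). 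Once the normal form and the list of examples are fixed, the actual extraction of each $\varepsilon_{\nu,\alpha}$ reduces to the kind of separated-variable Gaussian integral computation already carried out in the proof of Lemma~\ref{lem-1.4}, and the stated values $\XZERO$ through $\XFOURTEEN$ (times the appropriate $c_{\alpha-k}$) follow by collecting terms and simplifying with the $\Gamma$-function recursions \eqref{eqn-1.f}.
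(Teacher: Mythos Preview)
Your overall strategy matches the paper's: determine $\varepsilon_{0,1,3,4,7,14,\alpha}$ from the one-dimensional Lemma~\ref{lem-1.4}, then use products and warped products for the remaining constants. The execution differs in one essential place, and your proposal is vague precisely there.

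For the warped-product step you propose to compute the heat content on the warped metric and ``differentiate in the warping parameters''; but on a general warped product the heat content does not separate in any usable way, so this is not a computation one can simply carry out. The paper instead arranges an \emph{exact} reduction to the already-solved one-dimensional problem. On $M=\mathbb{T}^{m-1}\times S^1$ with $ds^2=\sum_a e^{2f_a(x)}d\theta_a^2+dx^2$ and $D_M=-\sum_a e^{-2f_a}(\partial_{\theta_a}^2+\delta_a\partial_{\theta_a})-\partial_x^2$, one takes $\phi_\Omega=\phi_\Sigma(x)$ independent of $\theta$ and $\rho_\Omega=\rho_\Sigma(x)\,e^{-\sum_a f_a(x)}$. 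Then $e^{-tD_M}\phi_\Omega=e^{-tD_S}\phi_\Sigma$ and $\rho_\Omega\,d\nu_M=\rho_\Sigma\,dx\,dy$, so $\beta_\Omega(\phi_\Omega,\rho_\Omega,D_M)=\operatorname{vol}(\mathbb{T}^{m-1})\cdot\beta_\Sigma(\phi_\Sigma,\rho_\Sigma,D_S)$ identically (Lemma~\ref{lem-3.1}). Since the one-dimensional side carries no $f_a',f_a'',\delta_a$, every such monomial in the $M$-expansion of Lemma~\ref{lem-1.5} must vanish; reading off the coefficients of $\sum_a f_a'$, $\sum_a(f_a')^2$, $\sum_{a,b}f_a'f_b'$, $\sum_a f_a''$, and $\sum_a\delta_a^2$ yields a linear system that fixes $\varepsilon_{2,5,8,9,10,11,\alpha}$. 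Using \emph{independent} warping functions $f_a$ is exactly what separates $L_{aa}L_{bb}$ from $L_{ab}L_{ab}$ (here $L_{ab}=-f_a'\delta_{ab}$), so no curved non-umbilic example is needed---your worry about spheres is resolved much more cheaply. The paper also handles $\varepsilon_{6,12,13,\alpha}$ first, by a plain unwarped product $S^1\times N$: the heat content then factors exactly, the $t^{(3-\alpha)/2}$ term picks up $\beta_{0,\alpha}^{\partial\Sigma}\cdot(-\int_N\phi_N D_N\rho_N)$, and one integration by parts on $N$ gives $\varepsilon_{6,\alpha}=-\varepsilon_{12,\alpha}=\tfrac12 c_\alpha$ and $\varepsilon_{13,\alpha}=0$ directly, which is cleaner than your eigenfunction route.
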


We specialize Theorem~\ref{thm-1.6} to the smooth setting by setting $\alpha=0$ to obtain:

\begin{corollary}
\ \medbreak
$\beta_{0}^{\partial\Omega}
 (\phi,\rho, D_M)=-\frac2{\sqrt\pi}\textstyle\int_{\partial\Omega}
 \YZERO\langle\phi_0,\rho_0\rangle dy$.
\medbreak$ \beta_{1}^{\partial\Omega} (\phi,\rho, D_M)
=\textstyle
-\int_{\partial\Omega}\left\{\YONE\langle\phi_1,\rho_0\rangle
+\YTWO\langle L_{aa}\phi_0,\rho_0\rangle \YTHREE
\langle\phi_0,\rho_1\rangle\right\} dy. $\medbreak$
\beta_{2}^{\partial\Omega} (\phi,\rho, D_M)
 =\textstyle -\frac8{3\sqrt\pi}\int_{\partial\Omega}\{\YFOUR
 \langle\phi_2,\rho_0\rangle
\YFIVE\langle
L_{aa}\phi_1,\rho_0\rangle
 +\YSIX\langle E
 \phi_0,\rho_0\rangle$
 \medbreak\quad
$+\YSEVEN\langle
\phi_0,\rho_2\rangle$
$\YEIGHT\langle
 L_{aa}\phi_0,\rho_1\rangle
 \YNINE\langle
 \operatorname{Ric}_{mm}\phi_0,\rho_0\rangle
 \YTEN\langle
 L_{aa}L_{bb}\phi_0,\rho_0\rangle$
 \medbreak\quad
$
 \YELEVEN\langle
L_{ab}L_{ab}\phi_0,\rho_0\rangle
\YTWELVE
\langle \phi_{0;a},\rho_{0;a}\rangle
+\YTHIRTEEN
\langle\tau\phi_0,\rho_0\rangle
\YFOURTEEN
\langle\phi_1,\rho_1\rangle\}dy$.
\end{corollary}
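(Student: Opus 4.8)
The plan is to derive the Corollary purely by specializing Theorem~\ref{thm-1.6} to $\alpha=0$; no new analysis is needed, and the work reduces to evaluating three Gamma-function expressions and fifteen rational functions of $\alpha$ at the single point $\alpha=0$.

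First I would check that $\alpha=0$ is an admissible value of the parameter. Since $\Re(0)=0<1$ the class $\mathcal K_0(V)$ is defined, and in this case $r^0\phi=\phi$ is already smooth up to $\partial\Omega$, so there is no boundary singularity and the coefficients obtained from Theorem~\ref{thm-1.2} and Theorem~\ref{thm-1.3} by setting $\alpha=0$, namely $\beta_{j,0}^{\partial\Omega}$, are precisely the coefficients $\beta_j^{\partial\Omega}$ of the smooth expansion written in the Corollary. By Lemma~\ref{lem-1.5} the universal constants depend holomorphically on $\alpha$; the normalizing constant $c_\alpha=2^{1-\alpha}\Gamma(\tfrac{2-\alpha}{2})\tfrac1{\sqrt\pi(\alpha-1)}$ is holomorphic near $\alpha=0$; and the denominators $\alpha-1$, $\alpha-2$, $1-\alpha$, $2-\alpha$ occurring in the fractions of Theorem~\ref{thm-1.6} are all nonzero at $\alpha=0$. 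Hence the formulas of Theorem~\ref{thm-1.6} may be evaluated termwise at $\alpha=0$.

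Next I would compute the three normalizing constants from the definition, using $\Gamma(1)=1$, $\Gamma(\tfrac32)=\tfrac{\sqrt\pi}{2}$, $\Gamma(2)=1$, to obtain $c_0=-\tfrac2{\sqrt\pi}$, $c_{-1}=-1$, $c_{-2}=-\tfrac8{3\sqrt\pi}$ (one may cross-check $c_0$ and $c_{-2}$ against the recursions~\eqref{eqn-1.f}). I would then substitute $\alpha=0$ into the fifteen coefficients of Theorem~\ref{thm-1.6}: the two coefficients multiplying $\langle L_{aa}\phi_0,\rho_0\rangle$ in $\beta_{1,\alpha}^{\partial\Omega}$ and $\langle\tau\phi_0,\rho_0\rangle$ in $\beta_{2,\alpha}^{\partial\Omega}$ vanish, while the remaining thirteen reduce to the rational numbers displayed in the Corollary. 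Assembling these with $c_0$, $c_{-1}$, $c_{-2}$ reproduces the three stated formulas.

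There is no genuine obstacle in this argument --- it is a substitution --- so the only point needing a moment's care is the absence of poles at $\alpha=0$, which was handled above. As a final sanity check I would specialize further to $\phi=\rho\equiv1$ and $D_M=\Delta_{g_e}$, where $\beta_0^\Omega=|\Omega|$ and $\beta_0^{\partial\Omega}=-\tfrac1{\sqrt\pi}\,|\partial\Omega|$; since the boundary part of the asymptotic series begins with the power $t^{(1+0-0)/2}=t^{1/2}$, the expansion starts $|\Omega|-\pi^{-1/2}\mathcal P(\Omega)\,t^{1/2}+O(t)$, in agreement with Equation~\eqref{eqn-1.d}.
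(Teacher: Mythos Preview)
Your proposal is correct and follows exactly the paper's own approach: the Corollary is obtained simply by setting $\alpha=0$ in Theorem~\ref{thm-1.6}, using the values $c_0=-\tfrac2{\sqrt\pi}$, $c_{-1}=-1$, $c_{-2}=-\tfrac8{3\sqrt\pi}$ already recorded in the paper. Your additional checks (admissibility of $\alpha=0$, absence of poles, and the sanity check against Equation~\eqref{eqn-1.d}) are sound elaborations but not required.
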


\subsection{Dirichlet and Robin boundary conditions}
Let $\mathcal{B}_Df:=f|_{\partial\Omega}$ be the {\it Dirichlet
boundary operator} and let
$\mathcal{B}_Sf:=(\nabla_mf+Sf)|_{\partial\Omega}$ be the {\it
Robin boundary operator}; here $S$ is an auxiliary endomorphism of
$V|_{\partial\Omega}$. We use $S^*$ and the dual connection to
define the dual boundary operator $\tilde{\mathcal{B}}$ on
$C^\infty(V^*)$. Let $D_{\mathcal{B}}$ denote the Dirichlet or
Robin realization of $D_M$. In this instance, the ambient manifold
$M$ plays no role and defines:
$$\beta(\phi,\rho, D_M,\mathcal{B})=\int_\Omega\langle e^{-tD_{\mathcal{B}}}\phi,\rho\rangle dx\,.$$
The the existence of an analogous asymptotic series in this setting has been established in \cite{BGS08}.
After adjusting the
notation suitably using Equation~(\ref{eqn-1.f}) from that in \cite{BGS08}, the results of \cite{BGS08} yield:
\begin{theorem}\label{thm-1.8}
\ \begin{enumerate}
\item $\beta_{0,\alpha}^{\partial M}(\phi,\rho,D_M,{B_{\mathcal{D}}})=c_\alpha\int_{\partial\Omega}\langle\phi_0,\rho_0\rangle dy$.
\item $\beta_{1,\alpha}^{\partial
M}(\phi,\rho,D_M,{B_{\mathcal{D}}})=c_{\alpha-1}\int_{\partial\Omega}
     \langle\phi_1-{\textstyle{\frac12}}L_{aa}\phi_0,\rho_0\rangle dy$.
\smallbreak\item $\beta_{2,\alpha}^{\partial
M}(\phi,\rho,D_M,{B_{\mathcal{D}}})
   =c_{\alpha-2}\int_{\partial\Omega}\{\langle\phi_2,\rho_0\rangle-\frac12\langle L_{aa}\phi_1,\rho_0\rangle
$\smallbreak\ \ $
    -\frac{\alpha-3}{2(\alpha-1)(\alpha-2)}\langle E\phi_0,\rho_0\rangle
   +\frac2{(\alpha-1)(\alpha-2)}\langle\phi_0,\rho_2\rangle
   -\frac1{(\alpha-1)(\alpha-2)}\langle L_{aa}\phi_0,\rho_1\rangle
$\smallbreak\ \ $+\frac{\alpha-3}{2(\alpha-1)(\alpha-2)}\langle\phi_{0:a},\rho_{0:a}\rangle
    -\frac{\alpha-1}{4(\alpha-2)}\langle\operatorname{Ric}_{mm}\phi_0,\rho_0\rangle$\smallbreak\ \ $
+\frac{\alpha-1}{8(\alpha-2)}\langle
L_{aa}L_{bb}\phi_0,\rho_0\rangle -\frac{\alpha-1}{4(\alpha-2)}
\langle L_{ab}L_{ab}\phi_0,\rho_0\rangle\}dy$. \smallbreak\item
$\beta_{0,\alpha}^{\partial
M}(\phi,\rho,D_M,{B_{\mathcal{R}}})=0$. \smallbreak\item
$\beta_{1,\alpha}^{\partial M}(\phi,\rho,
D_M,{B_{\mathcal{R}}})=\frac{2\alpha}{2-\alpha}c_{\alpha+1}\int_{\partial
M}\langle\phi_0,\tilde B_{\mathcal{R}}\rho\rangle dy$
\smallbreak\qquad$
=-\frac{2\alpha}{2-\alpha}\cdot\frac{\alpha-2}{2\alpha(\alpha-1)}c_{\alpha-1}\int_{\partial\Omega}
\langle\phi_0,\tilde B_{\mathcal{R}}\rho\rangle
dy$\smallbreak\qquad$
=-\frac1{1-\alpha}c_{\alpha-1}\int_{\partial\Omega}
\langle\phi_0,\tilde B_{\mathcal{R}}\rho\rangle dy$.
\smallbreak\item $\beta_{2,\alpha}^{\partial M}(\phi,\rho,
D_M,{B_{\mathcal{R}}}) =\frac{-2}{3-\alpha}c_\alpha\int_{\partial
M}\langle(1-\alpha) \phi_1+S\phi_0-\frac\alpha2L_{aa}\phi_0,\tilde
B_{\mathcal{R}}\rho\rangle dy$ \smallbreak\qquad
$=-\frac1{(\alpha-1)(\alpha-2)}c_{\alpha-2}\int_{\partial
M}\langle(1-\alpha) \phi_1+S\phi_0-\frac\alpha2L_{aa}\phi_0,\tilde
B_{\mathcal{R}}\rho\rangle dy$.
\end{enumerate}
\end{theorem}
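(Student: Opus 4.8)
The plan is to quote the heat content asymptotics for the Dirichlet and Robin realizations from \cite{BGS08} and then carry out the elementary bookkeeping needed to express their boundary coefficients in terms of the normalizing constants $c_\alpha$, $c_{\alpha-1}$, $c_{\alpha-2}$ fixed above. The setting of \cite{BGS08} is exactly the present one with the heat flow governed by $D_{\mathcal{B}}$: $\Omega$ is compact with smooth boundary, $D_M$ is of Laplace type, $\phi$ has a prescribed $r^{-\alpha}$ singularity at $\partial\Omega$, and the ambient manifold $M$ enters only through $D_M$. Thus the existence of the asymptotic series and the universal form of the boundary invariants are already available, and only the coefficients need to be rewritten.

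First I would record the \cite{BGS08} formulas in their native normalization. The constant $c_\alpha$ used here is the one from \cite{BGS08}, but \cite{BGS08} naturally presents the $j$-th boundary term with a shifted argument---for instance the Robin $j=1$ coefficient carries the prefactor $\frac{2\alpha}{2-\alpha}c_{\alpha+1}$ and the Robin $j=2$ coefficient the prefactor $\frac{-2}{3-\alpha}c_\alpha$. The key algebraic input is the pair of recursion relations \eqref{eqn-1.f}, which are consequences of $s\Gamma(s)=\Gamma(s+1)$ and let one slide between $c_{\alpha+1}$, $c_\alpha$, $c_{\alpha-1}$, $c_{\alpha-2}$. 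Since the $j$-th boundary term carries the power $t^{(1+j-\alpha)/2}$, raising $j$ by $1$ behaves like decrementing $\alpha$ by $1$ in the relevant Gamma-factor; this is why the normalized coefficient in front of the $j=0,1,2$ terms emerges as $c_\alpha$, $c_{\alpha-1}$, $c_{\alpha-2}$ respectively. With this dictionary, items (1)--(3) are obtained by direct substitution into the Dirichlet formulas of \cite{BGS08}.

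For the Robin realization the additional feature is the intervention of the boundary endomorphism $S$ and of the dual boundary operator $\tilde{\mathcal{B}}$. I would express the \cite{BGS08} Robin coefficients as pairings of $\phi_0$ and of $(1-\alpha)\phi_1+S\phi_0-\frac\alpha2 L_{aa}\phi_0$ against $\tilde B_{\mathcal{R}}\rho$---with $\tilde{\mathcal{B}}$ built from $S^*$ and the dual connection as explained above---and then apply \eqref{eqn-1.f} twice to collapse $\frac{2\alpha}{2-\alpha}c_{\alpha+1}$ to $-\frac1{1-\alpha}c_{\alpha-1}$ and $\frac{-2}{3-\alpha}c_\alpha$ to $-\frac1{(\alpha-1)(\alpha-2)}c_{\alpha-2}$. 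The chains of equalities displayed in items (5) and (6) are exactly these two computations; note that the factor $\alpha$ in $\frac{2\alpha}{2-\alpha}$ cancels the $\alpha$ in the denominator of the recursion, which is precisely what keeps the final coefficients holomorphic in $\alpha$ near $0$.

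The only genuine obstacle is the bookkeeping itself: one must track the signs, the powers of $2$ concealed inside $c_\alpha$, and above all the correspondence between the shift of $\alpha$ and the value of $j$, since an off-by-one there would corrupt every coefficient. Once the dictionary between the two normalizations has been pinned down via \eqref{eqn-1.f}---a finite, purely algebraic check---items (1)--(6) follow by transcription from \cite{BGS08}.
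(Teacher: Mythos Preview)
Your proposal is correct and matches the paper's own treatment: the paper does not prove Theorem~\ref{thm-1.8} independently but simply quotes the Dirichlet and Robin formulas from \cite{BGS08} and remarks that the stated form follows after adjusting the normalization via the recursion relations of Equation~(\ref{eqn-1.f}). The chains of equalities in items (5) and (6) are exactly the two applications of \eqref{eqn-1.f} you describe.
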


\section{Proof of Theorem~\ref{thm-1.2}}\label{sect-2}
We will follow the discussion of the pseudo-differential calculus
based on the work by Seeley \cite{S66,S69}, and refer to Gilkey
\cite{G94}. Throughout this section, we assume
the ambient Riemannian manifold $(M,g)$ is compact and without boundary.

By using a partition of unity, we may assume that
$\rho$ and $\phi$ are supported within coordinate systems. Since
the kernel of the heat equation decays exponentially in
$t^{-1}$ for $\operatorname{dist}_g(x,\tilde
x)\ge\epsilon>0$, we may assume that $\rho$ and $\phi$ have
support within the same coordinate system. There will, of course,
be three different types of coordinate systems to be considered -
those which touch the boundary of $\Omega$, those which are
contained entirely within the interior of $\Omega$, and those
which are contained in the exterior of $\Omega$; those contained
in the exterior of $\Omega$ play no role as they contribute an
exponentially small error in $t^{-1}$. In
Section~\ref{sect-2.1} we establish notational conventions and
prove a technical result. In Section~\ref{sect-2.2} we review the
notion of a pseudo-differential operator; in
Section~\ref{sect-2.3}, we construct the resolvent, and in
Section~\ref{sect-2.4} we construct an approximation to the kernel
of the heat equation. The new material begins in
Section~\ref{sect-2.5} where we begin the examination of the
 heat content. In Section~\ref{sect-2.6}, we establish the existence of two different kinds of asymptotic
 series. In Section~\ref{sect-2.7}, we use the results of Section~\ref{sect-2.6} to
 discuss coordinate systems contained in the interior of $\Omega$ and Section~\ref{sect-2.8}
 we use the results of Section~\ref{sect-2.6} to study coordinate systems
near the boundary. The fact that $D_M$ is of Laplace type plays a
central role in the discussion.

\subsection{Notational conventions}\label{sect-2.1} Let $x=(x^1,...,x^m)\in\mathbb{R}^m$ be coordinates on an
open set $\mathcal{O}\subset M$.
Let $(x,\xi)$ be the induced coordinate system on the cotangent space $T^*(\mathcal{O})$ where
we expand a $1$-form $\omega\in T^*\mathcal{O}$ in the form:
$$\omega=\xi_idx^i\text{ to define }\xi=(\xi_1,...,\xi_m)\,.$$
We let $x\cdot\xi$ be the natural pairing
$$x\cdot\xi:=x^i\xi_i\,.$$
If $\alpha=(a_1,...,a_m)$ is a multi-index, set
$$\begin{array}{llll}
|\alpha|=a_1+...+a_m,&\alpha!=a_1!...a_m!,&\partial_x^\alpha:=\partial_{x_1}^{a_1}...\partial_{x_m}^{a_m},\\
\phi^{(\alpha)}:=\partial_x^\alpha\phi,&
\rho^{(\alpha)}:=\partial_x^\alpha\rho,&D_x^\alpha:=\sqrt{-1}^{|\alpha|}d_x^\alpha,\vphantom{\vrule height 14pt}\\
d_\xi^\alpha=\partial_{\xi_1}^{a_1}...\partial_{\xi_m}^{a_m},&x^\alpha=x_1^{a_1}...x_m^{a_m},&
\xi^\alpha:=\xi_1^{a_1}...\xi_m^{a_m}.\vphantom{\vrule height 14pt}
\end{array}$$
For example, with these notational conventions, Taylor's theorem becomes
\begin{equation}\label{eqn-2.a}
f(x)=\sum_{|\alpha|\le n}\textstyle\frac1{\alpha!}f^{(\alpha)}(x_0)(x-x_0)^\alpha+O(|x-x_0|^{n+1})\,.
\end{equation}
Let $d\nu_x$, $d\nu_{\tilde x}$, and $d\nu_\xi$ denote Lebesgue measure on $\mathbb{R}^m$. Let $L^2_e$ denote $L^2(\mathbb{R}^m)$
with respect to Lebesgue measure. Let $g(x)=g_{ij}(x)dx^i\circ dx^j$ be a Riemannian metric on $\mathcal{O}$ and define:
$$||\xi||_{g^*(x)}^2:=g^{ij}(x)\xi_i\xi_j\text{ and }||x-\tilde x||_{g(x)}^2:=g_{ij}(x)(x^i-\tilde x^i)(x^j-\tilde x^j)\,.$$
We shall always be restricting to compact $x$ and $\tilde x$ subsets. Let
$(\cdot,\cdot)_e$ and $||_e$ denote the usual Euclidean inner product and norm, respectively:
$$(x,y)_e:=x_1y_1+...+x_my_m\text{ and }
||(x_1,...,x_m)||_e^2:=(x,x)_e=x_1^2+...+x_m^2\,.$$
We have estimates:
$$C_1||\xi||_e^2\le ||\xi||_{g^*(x)}^2\le C_2||\xi||_e^2
\text{ and }C_1||x-\tilde x||_e^2\le ||x-\tilde x||_{g(x)}^2\le C_2||x-\tilde x||_e^2
$$
for some constants $C_1>0$ and $C_2>0$. Let $\theta=\sqrt{g}$; $\theta$
is a symmetric metric so that $\theta_{ij}\theta_{jk}=g_{ik}$. We then have
$$ ||\xi||_{g^*(x)}^2=||\theta^{-1}(x)\xi||_e^2\text{ and }||(x-\tilde x)||_{g(x)}^2=||\theta(x)(x-\tilde x)||_e^2\,.$$

\begin{lemma}\label{lem-2.1} We use $\theta^2=g$ to lower indices and regard $\theta^2(x)(x-\tilde x)$ as a vector
$(X-\tilde X)_i:=g_{ij}(x-\tilde x)^j$. With this identification, we have:
\begin{eqnarray*}
&&||\xi||_{g^*(x)}^2+\sqrt{-1}(x-\tilde x)\cdot\xi/\sqrt t\\
&=&||\xi+\textstyle\frac12\sqrt{-1}(X-\tilde X)/\sqrt
t||_{g^*(x)}^2+||(x-\tilde x)||_{g(x)}^2/(4t)\,.
\end{eqnarray*}
\end{lemma}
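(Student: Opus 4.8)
The plan is to verify Lemma~\ref{lem-2.1} by a direct algebraic computation, treating it as a ``completing the square'' identity for the quadratic form $\xi\mapsto\|\xi\|_{g^*(x)}^2$ on the cotangent fibre, with the linear perturbation $\sqrt{-1}(x-\tilde x)\cdot\xi/\sqrt t$. The point $x$ is fixed throughout, so all metric coefficients $g^{ij}=g^{ij}(x)$ are constants and the whole statement is really an identity of polynomials in $\xi_1,\dots,\xi_m$ (with parameters $x-\tilde x$ and $t$). First I would record the two observations that make the bookkeeping clean: (i) the vector $X-\tilde X$ has lowered index $(X-\tilde X)_i=g_{ij}(x-\tilde x)^j$, so that its squared $g^*$-norm is $\|X-\tilde X\|_{g^*(x)}^2=g^{ij}(X-\tilde X)_i(X-\tilde X)_j=g^{ij}g_{ik}g_{jl}(x-\tilde x)^k(x-\tilde x)^l=g_{kl}(x-\tilde x)^k(x-\tilde x)^l=\|x-\tilde x\|_{g(x)}^2$; and (ii) the cross term $g^{ij}\xi_i(X-\tilde X)_j=g^{ij}\xi_i g_{jk}(x-\tilde x)^k=\delta^i_k\xi_i(x-\tilde x)^k=\xi\cdot(x-\tilde x)$, which is exactly the pairing appearing on the left-hand side up to the scalar $\sqrt{-1}/\sqrt t$. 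These two facts say precisely that raising/lowering with $g$ is compatible with the two norms and the pairing, which is the only geometric input needed.

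With those in hand the verification is mechanical. Expand the first term on the right-hand side, using bilinearity of $g^{ij}(\cdot)_i(\cdot)_j$ over the complex numbers:
\begin{eqnarray*}
\|\xi+\textstyle\frac12\sqrt{-1}(X-\tilde X)/\sqrt t\|_{g^*(x)}^2
&=&\|\xi\|_{g^*(x)}^2+2\cdot\textstyle\frac12\sqrt{-1}\,g^{ij}\xi_i(X-\tilde X)_j/\sqrt t\\
&&\quad+\textstyle\frac14(\sqrt{-1})^2\,\|X-\tilde X\|_{g^*(x)}^2/t\,.
\end{eqnarray*}
By observation (ii) the middle term equals $\sqrt{-1}\,\xi\cdot(x-\tilde x)/\sqrt t$, and by observation (i) together with $(\sqrt{-1})^2=-1$ the last term equals $-\frac14\|x-\tilde x\|_{g(x)}^2/t$. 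Adding $\|x-\tilde x\|_{g(x)}^2/(4t)$ to both sides cancels this last term and leaves exactly $\|\xi\|_{g^*(x)}^2+\sqrt{-1}(x-\tilde x)\cdot\xi/\sqrt t$, which is the left-hand side. One caveat to state explicitly: the symbol $\|\cdot\|_{g^*(x)}^2$ is being used here for the quadratic form $v\mapsto g^{ij}v_iv_j$ extended to complex arguments by bilinearity (not sesquilinearity), so it is not a genuine norm on complex vectors; the identity is purely formal at this level, and its analytic use later comes from splitting into the real part $\|\xi+\cdots\|$-with-real-shift when the shift is actually imaginary.

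There is essentially no obstacle here — the only thing to be careful about is the index gymnastics in observations (i) and (ii), i.e. making sure the contraction $g^{ij}g_{jk}=\delta^i_k$ is applied in the right place and that the factor $\frac12$ squares correctly to $\frac14$ and pairs with the $2$ from the cross term to give a clean coefficient $1$ on the pairing. So the ``hard part'' is merely presentational: stating clearly that $x$ is frozen and that all norms are evaluated at that same $x$, so that the identity is a fixed-coefficient quadratic identity rather than something requiring the machinery of Riemannian geometry.
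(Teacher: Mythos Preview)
Your proof is correct and is essentially the same completing-the-square argument as the paper's. The only cosmetic difference is that the paper introduces the symmetric square root $\theta=\sqrt{g}$ and rewrites everything in terms of the Euclidean inner product $(\theta^{-1}\xi,\theta^{-1}\xi)_e$ before completing the square, whereas you work directly with the index contractions $g^{ij}g_{jk}=\delta^i_k$; both routes encode the same two facts you isolated as (i) and (ii).
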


\begin{proof} We expand:
\medbreak\quad $||\xi||_{g^*(x)}^2+\sqrt{-1}(x-\tilde
x)\cdot\xi/\sqrt{t}$ \medbreak\qquad $=
(\theta^{-1}(x)\xi,\theta^{-1}(x)\xi)_e+\sqrt{-1}(\theta^{-1}(x)\theta^2(x)(\tilde
x-x)/\sqrt t,\theta^{-1}(x)\xi)_e$ \medbreak\qquad
$=(\theta^{-1}(x)\{\xi+\textstyle\sqrt{-1}\theta^{2}(x)(\tilde
x-x)/\sqrt t\},\theta^{-1}(x)\xi)_e$ \medbreak\quad
$=||\theta^{-1}(x)\{\xi+\frac12\sqrt{-1}\theta^{2}(x)(\tilde
x-x)/\sqrt t\}||_e^2 +||\theta(x)(\tilde x-x)||_e^2/(4t)$
\medbreak\qquad $=||\xi+\frac12\sqrt{-1}(X-\tilde X)/\sqrt
t||_{g^*(x)}^2+||(x-\tilde x)||_{g(x)}^2/(4t)$.
\end{proof}

\subsection{Pseudo-differential operators}\label{sect-2.2}
 If $P$ is a pseudo-differential operator with symbol $p(x,\xi)$, then
$P$ is characterized by following identity for all $\phi\in C_0^\infty(V)$ and $\rho\in C_0^\infty(V^*)$:
\begin{equation}\label{eqn-2.b}
\langle P\phi,\rho\rangle_{L_e^2}=(2\pi)^{-m}
\iiint e^{-\sqrt{-1}(x-\tilde x)\cdot\xi}\langle p(x,\xi)\phi(x),\rho(\tilde x)\rangle d\nu_xd\nu_\xi d\nu_{\tilde x}\,.
\end{equation}
The integrals in question here are iterated integrals - the convergence is not absolute
and the $d\nu_x$ integral has to be performed
before the $d\nu_\xi$ integral. However, if $p(x,\xi)$ decays rapidly enough in $\xi$,
then the integrals are in fact absolutely convergent
and we can interchange the order of integration to see following  \cite[Lemma 1.2.5]{G94}
that $P$ is given by a kernel:
\begin{equation}\label{eqn-2.c}
\begin{array}{l}
\displaystyle\langle P\phi,\rho\rangle_{L^2}
=\int\langle K_P(x,\tilde x)\phi(x),\rho(\tilde x)\rangle d\nu_xd\nu_{\tilde x}
\text{ where}\\
\displaystyle K_P(x,\tilde x):=(2\pi)^{-m}\int e^{-\sqrt{-1}(x-\tilde x)\cdot\xi}p(x,\xi)d\nu_\xi\,.
\end{array}\end{equation}

\subsection{The resolvent}\label{sect-2.3}
Let $D_M$ be an operator of Laplace type on $C^\infty(V)$ over
$M$. In a system of local coordinates $(x^1,...,x^m)$ on an open
subset $\mathcal{O}$ of $M$, we may change notation slightly from
that employed in Equation~(\ref{eqn-1.e}) and expand:
$$D_M=a_2^{ij}(x)D_{x_i}D_{x_j}+a_1^i(x)D_{x_i}+a_0(x)\,.$$
We ensure that Equation~(\ref{eqn-2.b}) defines the operator $D_M$
by defining:
\begin{eqnarray*}
&&p(x,\xi)=p_2(x,\xi)+p_1(x,\xi)+p_0(x,\xi)\text{ where }\\
&&p_2(x,\xi):=|\xi|_{g^*(x)}^2,\quad p_1(x,\xi):=a_1^i(x)\xi_i,\quad p_0(x,\xi):=a_0(x)\,.
\end{eqnarray*}

Let $\mathcal{R}\subset\mathbb{C}$ be the complement of a cone of
angle $\varepsilon_{1,\alpha}$ about the positive real axis and a
ball of radius $\varepsilon_{2,\alpha}^{-1}$ about the origin
where
$\varepsilon_{2,\alpha}=\varepsilon_{2,\alpha}(\varepsilon_{1,\alpha})$
is chosen so that $D_M$ has no eigenvalues in $\mathcal{R}$. We
let $\lambda\in\mathcal{R}$ henceforth. Following the discussion
of \cite[Lemma~1.7.2]{G94}, we define $r_n(x,\xi;\lambda)$ for
$(x,\xi)\in T^*(\mathcal{O})$ and $\lambda\in\mathcal{R}$
inductively by setting:
\begin{equation}\label{eqn-2.d}
\begin{array}{l}
r_0(x,\xi;\lambda):=(|\xi|^2_{g^*(x)}-\lambda)^{-1},\vphantom{A_{A_{A_{A_A}}}}\\
r_n:=-r_0\displaystyle\sum_{|\alpha|+2+j-k=n,j<n}
 {\textstyle\frac1{\alpha!}}\partial_\xi^\alpha p_k\cdot D_x^\alpha r_j\text{ for }n>0\,.
\end{array}\end{equation}
Define
$$\begin{array}{lll}
\operatorname{ord}(\partial_x^\alpha p_2)=|\alpha|,&\operatorname{ord}(\partial_x^\alpha p_1)=|\alpha|+1,&
\operatorname{ord}(\partial_x^\alpha p_0|)=|\alpha|+2,\\
\operatorname{weight}(\lambda)=2,&\operatorname{weight}(\xi)=1.\vphantom{\vrule height 12pt}
\end{array}$$
The following lemma follows immediately by induction from
the recursive definition in Equation~(\ref{eqn-2.d}):
\begin{lemma}\label{lem-2.2}
\ \begin{enumerate}
\item $r_n$ is homogeneous of order $n$ in the derivatives of the symbol of $D_M$.
\item$r_n$ has weight $-n-2$ in $(\xi,\lambda)$.
\item There exist polynomials $r_{n,j,\alpha}(x,D_M)$ for $n\le j\le 3n$ which are
homogeneous of order $n$ in the derivatives of the symbol of $D_M$
so:
$$
r_n(x,\xi;\lambda)=\sum_{2j-|\alpha|=n}r_{n,j,\alpha}(x,D_M)(|\xi|_{g^*(x)}^2-\lambda)^{-j-1}\xi^\alpha\,.
$$
\end{enumerate}\end{lemma}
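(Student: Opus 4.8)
The plan is to induct on $n$. The base case $n=0$ requires nothing: $r_0=(|\xi|^2_{g^*(x)}-\lambda)^{-1}$ involves no derivative of the symbol of $D_M$, hence is homogeneous of order $0$; it has weight $-2$ in $(\xi,\lambda)$ because $|\xi|^2_{g^*(x)}$ and $\lambda$ each have weight $2$; and it is already of the form in (3), with $j=|\alpha|=0$, $2j-|\alpha|=0$, and $r_{0,0,0}(x,D_M)=\operatorname{Id}$.

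For the inductive step I would fix $n>0$, assume (1)--(3) for every $r_j$ with $j<n$, and inspect a generic summand
\[
-\,r_0\cdot{\textstyle\frac1{\alpha!}}\,\partial_\xi^\alpha p_k\cdot D_x^\alpha r_j,\qquad |\alpha|+2+j-k=n,\quad j<n,
\]
of Equation~(\ref{eqn-2.d}). Observe first that the requirement $j<n$ is exactly what makes the recursion well-defined, and that when $k=2$ it forces $|\alpha|\ge 1$; hence $p_2$ never occurs undifferentiated in $\xi$, and so no explicit factor of $\lambda$ is ever produced. Next I would record the effect of the three factors separately. (i) Each $p_k$ is homogeneous of degree $k$ in $\xi$ and has $\operatorname{ord}(p_k)=2-k$, so $\partial_\xi^\alpha p_k$ is homogeneous of degree $k-|\alpha|$ in $\xi$, has weight $k-|\alpha|$, and has $\operatorname{ord}(\partial_\xi^\alpha p_k)=2-k$. (ii) One application of $\partial_{x_i}$ to a term $c(x)(|\xi|^2_{g^*(x)}-\lambda)^{-m}\xi^\gamma$, with $c$ a polynomial in the symbol derivatives, either differentiates $c$ --- raising $\operatorname{ord}$ by $1$ and leaving $m$ and $\gamma$ fixed --- or differentiates the power of $|\xi|^2_{g^*(x)}-\lambda$, producing $-m(\partial_{x_i}g^{ab})\xi_a\xi_b(|\xi|^2_{g^*(x)}-\lambda)^{-m-1}\xi^\gamma$, which raises $\operatorname{ord}$ by $1$, raises $m$ by $1$, raises $|\gamma|$ by $2$, and therefore leaves both the weight and the integer $2m-|\gamma|$ unchanged. (iii) Multiplying by $-r_0$ raises $m$ by $1$, and multiplying by $\partial_\xi^\alpha p_k$ alters only the $\xi$-monomial and the polynomial coefficient; neither operation destroys the shape of a term as a polynomial in the symbol derivatives times a negative power of $|\xi|^2_{g^*(x)}-\lambda$ times a monomial in $\xi$.

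Assembling (i)--(iii) for one summand, the induction hypothesis gives that $D_x^\alpha r_j$ has $\operatorname{ord}=j+|\alpha|$, weight $-j-2$, and is a sum of terms $c(x)(|\xi|^2_{g^*(x)}-\lambda)^{-m}\xi^\gamma$ with $2m-|\gamma|=j+2$. Hence the summand has order $0+(2-k)+(j+|\alpha|)=|\alpha|+j+2-k=n$, which is (1); it has weight $(-2)+(k-|\alpha|)+(-j-2)=(k-|\alpha|-j)-4=(2-n)-4=-n-2$, which is (2); and writing each of its monomials in the form $r_{n,j',\beta}(x,D_M)(|\xi|^2_{g^*(x)}-\lambda)^{-j'-1}\xi^\beta$, the weight $-2(j'+1)+|\beta|$ must equal $-n-2$ by (2), i.e.\ $2j'-|\beta|=n$, which is exactly the summation constraint in (3). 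The remaining part of (3), the range $n\le j'\le 3n$ of the exponent index, is read off by propagating the inductive range for the exponents occurring in $r_j$ through the at most $|\alpha|$ increments of $D_x^\alpha$ and the single increment of $-r_0$, using $j+|\alpha|=n-2+k\le n$. These three properties are stable under addition, so $r_n$ inherits all of them, completing the induction. The one point needing attention is the observation flagged above, that $j<n$ keeps $p_2$ out of the expansion undifferentiated in $\xi$; beyond that the argument is routine Leibniz-rule bookkeeping, and I do not expect a substantive obstacle.
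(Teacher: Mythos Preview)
Your argument is exactly the induction the paper has in mind; the paper itself offers only the single sentence ``follows immediately by induction from the recursive definition,'' and your treatment of (1), (2), and the structural form and constraint $2j'-|\beta|=n$ in (3) is a correct fleshing-out of that.

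One small caveat: your sketch for the range $n\le j'\le 3n$ is too vague to succeed, and in fact the lower bound as written in the lemma is not quite right. Already at $n=2$ the term $-r_0\,p_0\,r_0=-a_0(|\xi|_{g^*(x)}^2-\lambda)^{-2}$ has $j'=1<n$. Propagating the inductive range through $D_x^\alpha$ and the extra factor of $r_0$ gives only $j'\ge j+1$, and since $j$ can be as small as $n-2$ (take $k=0$, $|\alpha|=0$) this yields $j'\ge n-1$, not $j'\ge n$. The sharp lower bound is $\lceil n/2\rceil$, which follows immediately from $2j'-|\beta|=n$ and $|\beta|\ge 0$. This discrepancy is harmless: nothing downstream in the paper uses the range, only the identity $2j-|\alpha|=n$, which you do establish.
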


We use Equation~(\ref{eqn-2.b}) to define the pseudo-differential operator $R_n(\lambda)$ with symbol $r_n$
so that
$$\langle R_n(\lambda)\phi,\rho\rangle_{L^2_e}=(2\pi)^{-m}\iiint e^{-\sqrt{-1}(x-\tilde x)\cdot\xi}\langle r_n(x,\xi;\lambda)\phi(x),
\rho(\tilde x)\rangle d\nu_xd\nu_\xi d\nu_{\tilde x}\,.$$
 Let $||_{-k,k}$ be the norm of a map from the Sobolev space $H_{-k}$ to the Sobolev space $H_k$.
By \cite[Lemma~1.7.3]{G94} we have that if $\lambda\ge\lambda(k)$ and if $n\ge n(k)$, then:
$$
 ||(D_M-\lambda)^{-1}-R_0(\lambda)-...-R_n(\lambda)||_{-k,k}\le C_k(1+|\lambda|)^{-k}\,.
$$

 \subsection{An approximation to the kernel of the heat equation}\label{sect-2.4}
Let $\gamma$ be the boundary of $\mathcal{R}$ oriented suitably. We use the operator valued Riemann integral to define
$$e^{-tD_M}:=\frac1{2\pi\sqrt{-1}}\int_\gamma e^{-t\lambda}(D_M-\lambda)^{-1}d\lambda\,.$$
We use \cite[Lemma~1.7.5]{G94} to see that this is the fundamental solution of the heat equation
and belongs to $\operatorname{Hom}(H_{-k},H_k)$ for any $k$. We now let
\begin{equation}\label{eqn-2.e}
e_n(x,\xi;t)=\frac1{2\pi\sqrt{-1}}\int_\gamma e^{-t\lambda}r_n(x,\xi;\lambda)d\lambda
\end{equation}
define the pseudo-differential operator
\begin{equation}\label{eqn-2.f}
E_n(t,D_M):=\frac1{2\pi\sqrt{-1}}\int_\gamma
e^{-t\lambda}R_n(\lambda)d\lambda\,.
\end{equation}
We use Lemma~\ref{lem-2.2}~(3) and Cauchy's integral formula to rewrite Equation~(\ref{eqn-2.e}) as:
\begin{equation}\label{eqn-2.g}
e_n(x,\xi;t)=\sum_{2j-|\alpha|=n}\frac{t^j}{j!}\xi^\alpha
e^{-t|\xi|_{g^*(x)}^2}r_{n,j,\alpha}(x,D_M)\,.
\end{equation}
We now use Equation~(\ref{eqn-2.c}) and Equation~(\ref{eqn-2.g})
to see the operator $E_n$ of Equation~(\ref{eqn-2.f}) is given by the smooth kernel
\begin{eqnarray}\label{eqn-2.h}
&&K_n(x,\tilde x;t)\\
&&\quad:=\sum_{2j-|\alpha|=n}(2\pi)^{-m}\frac{t^j}{j!}\int_{\mathbb{R}^m}
e^{-t|\xi|_{g^*(x)}^2-\sqrt{-1}(x-\tilde x)\cdot\xi}\xi^\alpha
r_{n,j,\alpha}(x,D_M)d\nu_\xi\,.\nonumber
\end{eqnarray}
Let $||_{C^k}$ denote the $C^k$ norm. Given any $k\in\mathbb{N}$, there exists $n(k)$ so that
if $n\ge n(k)$ and if $0<t<1$, then \cite[Lemma 1.8.1]{G94} implies:
$$
||e^{-tD_M}-\sum_{n=0}^{n(k)}E_n(t,D_M)||_{-k,k}\le C_kt^k
$$
This gives rise to a corresponding estimate (after increasing $n(k)$ appropriately):
\begin{equation}\label{eqn-2.i}
||K(t,x,\tilde x,D_M)-\sum_{n=0}^{n(k)}K_n(t,x,\tilde x,
D_M)||_{C^k}\le C_kt^k\,.
\end{equation}
\subsection{Examining the heat content}\label{sect-2.5}
We use Equation~(\ref{eqn-2.g}), Equation~(\ref{eqn-2.h}), and Equation~(\ref{eqn-2.i}) to expand
\begin{eqnarray*}
&&\beta(\phi,\rho,D_M)(t)=\sum_{2j-|\alpha|=0}^{n(k)}(2\pi)^{-m}\frac{t^j}{j!}\iiint
e^{-t|\xi|_{g^*(x)}^2-\sqrt{-1}(x-\tilde x)\cdot\xi}\xi^\alpha\\
&&\qquad\qquad\qquad\qquad\times \langle
r_{n,j,\alpha}(x,D_M)\phi(x),\rho(\tilde x)\rangle
 d\nu_xd\nu_\xi d\nu_{\tilde x}+O(t^k)\,.
\end{eqnarray*}
We examine a typical term in the sum setting:
\begin{eqnarray*}
&&\beta_{n,j,\alpha}(\phi,\rho)(t):=(2\pi)^{-m}\frac{t^j}{j!}\iiint e^{-t|\xi|_{g^*(x)}^2-\sqrt{-1}(x-\tilde x)\cdot\xi}\xi^\alpha\\
&&\qquad\qquad\qquad\qquad\times\langle r_{n,j,\alpha}(x)\phi(x),\rho(\tilde x)\rangle d\nu_xd\nu_\xi d\nu_{\tilde x}\,.
\end{eqnarray*}
Here all integrals are over $\mathbb{R}^m$ and converge absolutely for $t>0$; $\phi$ and $\rho$ have compact support.
We change variables setting $\tilde\xi:=t^{1/2}\xi$ to
express:
\begin{eqnarray*}
&&\beta_{n,j,\alpha}(\phi,\rho)(t)=\frac{t^{j-\frac12m-\frac12|\alpha|}}{j!}(2\pi)^{-m}
\iiint e^{-|\tilde\xi|_{g^*(x)}^2-\sqrt{-1}(x-\tilde x)\cdot\tilde\xi/\sqrt{t}}\tilde\xi^\alpha\\
&&\qquad\qquad\qquad\qquad\times\displaystyle \langle
r_{n,j,\alpha}(x,D_M)\phi(x),\rho(\tilde x)\rangle
d\nu_xd\nu_{\tilde\xi}d\nu_{\tilde x}\,.\nonumber
\end{eqnarray*}
Note that $\frac12n=j-\frac12|\alpha|$. We adopt the notation of Lemma~\ref{lem-2.1} and
make a complex change of coordinates
setting:
$$\textstyle\eta=\tilde\xi+\frac12\sqrt{-1}(X-\tilde X)/\sqrt t\,.$$
We then apply Lemma~\ref{lem-2.1} and the binomial theorem to express:
\begin{eqnarray*}
&&\beta_{n,j,\alpha}(\phi,\rho,)(t)\\
&=&(2\pi)^{-m}\sum_{\alpha_1+\alpha_2=\alpha}
{\textstyle\frac{\alpha!}{j!\alpha_1!\alpha_2!}}(-\sqrt{-1})^{|\alpha_2|}
t^{(n-m)/2}\iiint e^{-||\eta||_{g(x)}^2}\eta^{\alpha_1}\\
&&\qquad\times e^{-||x-\tilde
x||_{g(x)}^2/(4t)}\left(\textstyle\frac{X-\tilde X}{2\sqrt
t}\right)^{\alpha_2} \langle
r_{n,j,\alpha}(x,D_M)\phi(x),\rho(\tilde x)\rangle d\nu_\eta
d\nu_xd\nu_{\tilde x}\,.\nonumber
\end{eqnarray*}
The $d\nu_\eta$ integral is over the complex domain
$\eta\in\mathbb{R}+\frac12\sqrt{-1}\textstyle\frac{X-\tilde X}{\sqrt t}$.
But we can deform that domain back to the real domain $\eta\in\mathbb{R}$. Set
\begin{eqnarray*}
&&c_{\alpha_1,\alpha_2,j}:=(2\pi)^{-m}\frac1{j!}\frac{(\alpha_1+\alpha_2)!}{\alpha_1!\alpha_2!}
(-\sqrt{-1})^{|\alpha_2|}
\int\eta^{\alpha_1}e^{-|\eta|_{g^*(x)}^2}d\nu_\eta\text{ to express}\\
&&\beta_{n,j,\alpha}(t)=\sum_{\alpha_1+\alpha_2=\alpha}c_{\alpha_1,\alpha_2,j}t^{(n-m)/2}\\
&&\qquad\times \iint e^{-||x-\tilde
x||_{g(x)}^2/(4t)}\left(\textstyle\frac{X-\tilde X}{2\sqrt
t}\right)^{\alpha_2}\langle
r_{n,j,\alpha}(x,D_M)\phi(x),\rho(\tilde x)\rangle
d\nu_xd\nu_{\tilde x}\,.
\end{eqnarray*}
This sum ranges over $|\alpha_1|$ even as otherwise $c_{\alpha_1,\alpha_2}$ vanishes. Thus
$|\alpha_2|\equiv|\alpha|\equiv n\mod 2$. This reduces the proof to considering expressions of the form:
\begin{eqnarray}
f_{n,j,\alpha,\alpha_2}(t)&:=&t^{(n-m)/2}\iint
e^{-||x-\tilde x||_{g(x)}^2/(4t)}(\textstyle\frac{X-\tilde
X}{\sqrt t})^{\alpha_2}\nonumber\\
&&\qquad\times \langle r_{n,j,\alpha}(x,D_M)\phi(x),\rho(\tilde
x)\rangle d\nu_xd\nu_{\tilde x},
\label{eqn-2.j}\\
&&\text{ where }|\alpha_2|\equiv n\text{ mod }2\text{ and
}\operatorname{ord}(r_{n,j,\alpha}(x,D_M))=n\,.\nonumber
\end{eqnarray}
\subsection{Asymptotic series}\label{sect-2.6}

Before proceeding further with our analysis of Equation~(\ref{eqn-2.j}), we must establish the existence of asymptotic
series in certain quite general contexts:
\begin{lemma}\label{lem-2.3}
Let $\Phi\in L^1(\mathbb{R}^m)$, let $\rho\in
C^\infty(\mathbb{R}^m)$ have compact support in an open subset
$\mathcal{O}\subset\mathbb{R}^m$, and let $(X-\tilde
X)_i:=g_{ij}(x-\tilde x)^j$. Let \medbreak $\displaystyle
F(t):=t^{(n-m)/2}\int_{\mathcal{O}}\int_{\mathcal{O}}
e^{-||x-\tilde x||^2_{g(x)}/(4t)}\left({\textstyle\frac{X-\tilde
X}{\sqrt t}}\right)^{\alpha_2} \langle\Phi(x),\rho(\tilde
x)\rangle d\nu_xd\nu_{\tilde x},\text{ if }m\ge1$, \medbreak
$\displaystyle G(t):=t^{(n-1)/2}\int_0^{\infty}\int_0^{\infty}
e^{-|x+\tilde x|^2_e/(4t)}\left( {\textstyle\frac{X+\tilde
X}{\sqrt t}}\right)^{\alpha_2}\langle\Phi(x),\rho(\tilde x)\rangle
d\nu_xd\nu_{\tilde x},\text{ if }m=1$.
\begin{enumerate}
\item There exist smooth coefficients $c_{\sigma,\alpha_2}=c_{\sigma,\alpha_2}(g(x))$
so that there is a complete asymptotic series as $t\downarrow0^+$ of the form
$$F(t)\sim\sum_{|\sigma|=0}^\infty t^{(n+|\sigma|)/2}\int_{\mathcal{O}} c_{\sigma,\alpha_2}
\langle \Phi(x),\rho^{(\sigma)}(x)\rangle d\nu_x\,.$$
\item Let $m=1$. Near $0$, we suppose $\Phi\sim x^{-\alpha}\sum_{i\ge0}C_ix^i$ for $\Re(\alpha)<1$.
There exist universal constants $c_{i,j,\alpha_2}$ so that
 there is a complete asymptotic series as $t\downarrow0^+$ of the form
$$G(t)\sim\sum_{i,j=0}^\infty t^{(n+1+i+j-\alpha)/2}c_{i,j,\alpha_2}
\langle C_i,\rho^{(j)}(0)\rangle\,.$$
\end{enumerate}\end{lemma}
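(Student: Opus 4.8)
The plan is to prove (1) and (2) by the same mechanism: Taylor-expand the slowly-varying factors about the diagonal (for $F$) or about the common boundary point $0$ (for $G$), reduce each term to a Gaussian moment integral of the form $\int y^\beta e^{-\|y\|^2_{g^*(x)}} d\nu_y$ (for $F$) or $\int_0^\infty\!\int_0^\infty (u+\tilde u)^{?} u^{?-\alpha}\tilde u^? e^{-(u+\tilde u)^2/4}\,du\,d\tilde u$ (for $G$), control the tails, and collect powers of $t$. For (1), I would first localize: since $\rho$ has compact support in $\mathcal{O}$ and $e^{-\|x-\tilde x\|^2_{g(x)}/(4t)}$ is exponentially small in $t^{-1}$ once $\|x-\tilde x\|_e\ge\epsilon$, I may replace the $d\nu_{\tilde x}$-integral over $\mathcal{O}$ by one over a small ball $B(x,\epsilon)$, up to an error that is $O(t^N)$ for all $N$. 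On that ball I substitute $\tilde x = x + \sqrt t\, y$ so that $(X-\tilde X)/\sqrt t$ becomes a linear function of $y$ with coefficients $g_{ij}(x)$, and the Gaussian weight becomes $e^{-\|y\|^2_{g(x)}/4}$ up to the rescaling already built into the statement's normalization (here I will use Lemma~\ref{lem-2.1}'s identification of $\theta^2$ to keep the quadratic form clean). Then Taylor's theorem \eqref{eqn-2.a} applied to $\rho(\tilde x)=\rho(x+\sqrt t\,y)$ in powers of $\sqrt t$ produces, for each multi-index $\sigma$, a term $t^{(n+|\sigma|)/2}$ times $\int_{\mathcal O}\langle\Phi(x),\rho^{(\sigma)}(x)\rangle$ times the Gaussian moment $\frac{1}{\sigma!}\int_{\mathbb R^m} y^\sigma (\text{linear in }y)^{\alpha_2} e^{-\|y\|^2_{g(x)}/4}\,d\nu_y$, which I package as the smooth coefficient $c_{\sigma,\alpha_2}(g(x))$. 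The remainder after truncating Taylor's expansion at order $N$ is $O(|y|^{N+1})$ uniformly in $x$ on the compact support of $\rho$, and since $\Phi\in L^1$ and the Gaussian has all moments, this remainder contributes $O(t^{(n+N+1)/2})$; letting $N\to\infty$ gives the asymptotic series. Note that $\Phi$ itself is never differentiated, which is essential because $\Phi$ is only assumed $L^1$.

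For (2) the geometry is one-dimensional and flat ($g=g_e$ is already used), but the novelty is the boundary singularity $\Phi\sim x^{-\alpha}\sum_i C_i x^i$ and the fact that both variables run over $[0,\infty)$ against the kernel $e^{-(x+\tilde x)^2/(4t)}$ rather than $e^{-(x-\tilde x)^2/(4t)}$ — there is no diagonal here, only the corner $x=\tilde x=0$. I would again localize: away from $x=\tilde x=0$ the weight $e^{-(x+\tilde x)^2/(4t)}$ is exponentially small, so I may restrict to $x,\tilde x\in[0,\epsilon]$. Rescale $x=\sqrt t\,u$, $\tilde x=\sqrt t\,\tilde u$; the weight becomes $e^{-(u+\tilde u)^2/4}$, the factor $(X+\tilde X)/\sqrt t$ becomes $u+\tilde u$, the measure contributes $t$, and $\rho(\tilde x)$ Taylor-expands as $\sum_j \frac{1}{j!}\rho^{(j)}(0)\,t^{j/2}\tilde u^j$. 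For $\Phi$ I use the asymptotic expansion: writing $\Phi(\sqrt t\,u)\sim \sum_i C_i t^{(i-\alpha)/2} u^{i-\alpha}$, each pair $(i,j)$ yields $t^{(n-1)/2}\cdot t \cdot t^{(i-\alpha)/2}\cdot t^{j/2}$ times the universal integral $c_{i,j,\alpha_2}=\frac{1}{j!}\int_0^\infty\!\int_0^\infty (u+\tilde u)^{\alpha_2} u^{i-\alpha}\tilde u^j e^{-(u+\tilde u)^2/4}\,du\,d\tilde u$, which converges precisely because $\Re(\alpha)<1$ and the Gaussian decay dominates, and which depends holomorphically on $\alpha$ — exactly the shape of coefficient appearing in Lemma~\ref{lem-1.4}. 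Summing over $i,j$ gives $\sum_{i,j} t^{(n+1+i+j-\alpha)/2} c_{i,j,\alpha_2}\langle C_i,\rho^{(j)}(0)\rangle$ as claimed.

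The main obstacle — and the point requiring genuine care rather than routine bookkeeping — is making the two expansions (of $\Phi$ near the boundary and of $\rho$ near $0$ or the diagonal) rigorous as \emph{asymptotic} rather than convergent expansions, i.e. controlling the remainders after finite truncation. For $\rho$ this is standard (smooth, compactly supported, Taylor with integral remainder). For $\Phi$ one must be careful that "$\Phi\sim x^{-\alpha}\sum C_i x^i$ near $0$" only controls $\Phi$ on a neighborhood of $0$: after subtracting the partial sum $x^{-\alpha}\sum_{i\le N}C_i x^i$, the difference is $O(x^{N+1-\Re\alpha})$ near $0$ but is merely $L^1$ globally; I handle this by splitting $[0,\epsilon]$ from $[\epsilon,\infty)$, using the exponential smallness of the weight on the latter and the power bound on the former, and then checking that $\int_0^{\epsilon/\sqrt t}(u+\tilde u)^{\alpha_2} u^{N+1-\Re\alpha}\tilde u^j e^{-(u+\tilde u)^2/4}\,du\,d\tilde u$ is bounded uniformly in $t$, so that this remainder contributes a genuinely higher power of $t$. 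The interplay with the complex parameter $\alpha$ — needing the coefficients holomorphic in $\alpha$ for the later analytic-continuation argument of Lemma~\ref{lem-3.1} — is automatic once the coefficients are written as the explicit convergent integrals above, since $\Re(\alpha)<1$ keeps them in the domain of holomorphy.
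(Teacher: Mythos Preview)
Your approach is correct and matches the paper's: translate to the diagonal and rescale by $\sqrt t$ (for $F$) or rescale both variables from the corner $x=\tilde x=0$ (for $G$), Taylor-expand the smooth factors, and read off the coefficients as Gaussian-type moment integrals, with the rescaled domain expanding to $\mathbb{R}^m$ (resp.\ $[0,\infty)^2$) as $t\downarrow0$. Your remainder discussion is more explicit than the paper's terse treatment, and the reference to Lemma~\ref{lem-2.1} is slightly misplaced (that lemma handles the complex $\xi$-shift rather than the index-lowering $U_i=g_{ij}u^j$, which is simply the definition), but otherwise the two arguments coincide.
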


\begin{proof} We make the change of variables $\tilde x=x+u$ and dually $\tilde X=X+U$ where $U_i=g_{ij}u^j$ to express
$$
F(t)=t^{(n-m)/2}\iint
e^{-||u||_{g(x)}^2/(4t)}\textstyle\left(\frac{U}{\sqrt
t}\right)^{\alpha_2}\langle\Phi(x),\rho(x+u)\rangle
d\nu_ud\nu_x\,.
$$
The $d\nu_u$ integral decays exponentially for $|u|>t^{1/4}$ so we may assume the $d\nu_u$ integral is localized
to $|u|<t^{1/4}$. For $u$ small, we use Equation~(\ref{eqn-2.a}) to express:
\begin{eqnarray*}
&&\rho(x+u)\sim \sum_{|\sigma|\le N}\textstyle\frac1{\sigma!}u^\sigma d_x^\sigma\rho(x)+O(u^N),\\
&&F(t)=t^{(n-m)/2}\sum_{|\sigma|\le
N}{\textstyle\frac1{\sigma!}}\iint e^{-||u||_{g(x)}^2/(4t)}\\
&&\qquad\times
\left\{ ({\textstyle\frac{U}{\sqrt t}})^{\alpha_2}
u^\sigma\langle \Phi(x),\rho^{(\sigma)}(x)\rangle+O(|u|^N)\right\}
d\nu_ud\nu_x\,.\nonumber
\end{eqnarray*}
We set $\tilde u=u/\sqrt t$ and $\tilde U=U/\sqrt{t}$ to express
$$
F(t)\sim\sum_{|\sigma|\le N}{\textstyle\frac1{\sigma!}}t^{(n+|\sigma|)/2}
\iint e^{-||\tilde u||_{g(x)}^2/4}\tilde U^{\alpha_2}\tilde u^\sigma\langle\Phi(x),\rho^{(\sigma)}(x)\rangle
d\nu_{\tilde u}d\nu_x\,.
$$
The $d\nu_x$ integral remains an integral over $\mathcal{O}$. But as $t\downarrow0$, the $d\nu_{\tilde u}$
integral expands to $\mathbb{R}^m$ and defines the coefficients $c_{\sigma,\alpha_2}=c_{\sigma,\alpha_2}(g)$.
This establishes Assertion~(1).

Let $m=1$. We note that $G$ decays exponentially
for $x\ge\varepsilon>0$ or $\tilde x\ge\varepsilon>0$. On the small square, we expand
$$\Phi(x)\sim\sum_{i\ge0}C_ix^i\text{ and }\rho(\tilde x)\sim\tilde x^j\rho^{(j)}(0)\,.$$
We then make the change of variables with $u=x/\sqrt t$ and
$\tilde u=\tilde x/\sqrt t$ to express \medbreak\qquad
$$ G(t)\sim t^{(n+1)/2}\sum_{i+j\le
N}t^{(i+j-\alpha)/2}c_{i,j,\alpha}\langle
C_i,\rho^j(0)\rangle$$
where
\medbreak\qquad\quad\hfill{$\displaystyle c_{i,j,\alpha}:=\int_0^\infty\int_0^\infty
e^{-\frac14||u+\tilde u||^2_e}u^{i-\alpha}\tilde u^{j}
d\nu_ud\nu_{\tilde u}$.}\hfill\phantom{.}\end{proof}

\subsection{The interior terms in Theorem~\ref{thm-1.2}}\label{sect-2.7}
We apply Lemma~\ref{lem-2.3}~(1) to the case
$\Phi=r_{n,j,\alpha}\phi$ in Equation~(\ref{eqn-2.j}). By
assumption $r_{n,j,\alpha}$ is of order $n$ in the derivatives of
the total symbol of $D_M$. We have $\rho^{(\sigma)}$ is of order
$|\sigma|$ in the derivatives of $\rho$. Thus we have expressions
which are of order $n+|\sigma|$ in the derivatives of the symbol
of $D_M$ and in the derivatives of $\rho$. Furthermore, the
$d\nu_{\tilde u}$ integral vanishes unless $|\sigma|+|\alpha_2|$
is even. Since $|\alpha_2|\equiv n$ mod $2$, this implies
$|\sigma|+n$ is even so terms involving fractional powers of $t$
vanish as claimed. This leads to exactly the sort of interior
expansion described in Theorem~\ref{thm-1.2}.

\subsection{The heat content on a chart near the boundary of $\Omega$}\label{sect-2.8}
We now assume the coordinate chart meets the boundary. Again, we examine Equation~(\ref{eqn-2.j}).
We set $x=(r,y)$; the $d\nu_r$ integral ranges over $0\le r<\infty$
and the $d\nu_y$ integral ranges over $y\in\mathbb{R}^{m-1}$.
The $dy$ and $d\tilde y$ integrals are handled using the analysis of Lemma~\ref{lem-2.3}~(1).
We therefore suppress these variables and
concentrate on the $d\nu_r$ integrals and in essence assume that we are dealing with a $1$-dimensional problem;
we can always choose the coordinates so $ds^2=dr^2+g_{ab}(r,y)dy^idy^j$. We resume
the computation with Equation~(\ref{eqn-2.j}) where we do not perform the integrals in the two variables normal to the boundary.
We suppress other elements of the notation to examine an integral of the form:
$$f(t):=t^{(n-1)/2}
\int_{x=0}^\infty\int_{\tilde x=0}^\infty  e^{-||x-\tilde
x||_e^2/(4t)}(\textstyle\frac{X-\tilde X}{\sqrt t})^{\alpha_2}
\langle r_{n,j,\alpha}(x,D_M)\phi(x),\rho(\tilde x)\rangle
d\nu_xd\nu_{\tilde x}\,.
$$
Here $q$ has compact support in $(x,\tilde x)$ and is homogeneous
of degree $n$ in the derivatives of the symbol of $D_M$, in the
derivatives of $\phi$, and in the derivatives of $\rho$; there is
no trouble with convergence. We suppress the role of $|_{g}$ in
the tangential integrals which can also depend on the normal
parameter. A crucial point is that the extra power of
``$-\frac12$" occurs in applying Lemma~\ref{lem-2.3} to
$\mathbb{R}^{m-1}$. We set
\begin{eqnarray*}
&& f_1(t):=t^{(n-1)/2}\int_{x=0}^\infty\int_{\tilde x =-\infty}^0
e^{-||x-\tilde x||_e^2/(4t)}\\
&&\qquad\qquad\quad\times(\textstyle\frac{X-\tilde X}{\sqrt
t})^{\alpha_2} \langle r_{n,j,\alpha}(x, D_M)\phi(x),\rho(\tilde
x)\rangle d\nu_xd\nu_{\tilde x}.
\end{eqnarray*}
 Again, there is no trouble with convergence.
The sum $f(t)+f_1(t)$ can then be handled as in
Section~\ref{sect-2.7} and gives rise to the interior term we have
been studying. Thus everything new comes from $f_1(t)$ and this is
handled by Lemma~\ref{lem-2.3}~(2) with $\alpha=0$ after we
replace $\tilde x$ by $-\tilde x$. The terms multiplying
$t^{(n+1+|\alpha_1|+|\alpha_2|)/2}$ have degree
$n+|\alpha_1|+|\alpha_2|$ in the derivatives of the symbol of
$D_M$, of the derivatives of $\phi$, and of the derivatives of
$\rho$. After setting $j=n+|\alpha_1|+|\alpha_2|$ and summing, we
obtain the boundary terms of Theorem~\ref{thm-1.2}. We start out
 at $t^{(n-1)/2}$ but then we have two factors of $t^{1/2}$
 arising from the $x$ and $\tilde x$ change of variable.

 \begin{remark}\rm It is clear from the construction that the coefficients in the boundary asymptotic
 expansion depend holomorphically on the complex parameter $\alpha$ for $\Re(\alpha)<1$; the fact that the
 constants $\varepsilon_{\nu,\alpha}$ of Lemma~\ref{lem-1.5} are holomorphic as well now follows.
 \end{remark}

\section{The proof of Theorem~\ref{thm-1.6}}\label{sect-3}
We will establish Theorem~\ref{thm-1.6} by evaluating the universal constants which appear
in Lemma~\ref{lem-1.5}.
In Section~\ref{sect-3.1}, we establish Lemma~\ref{lem-1.4} which relates to the heat content
asymptotics on the line.
This result is then used
in Section~\ref{sect-3.2} to determine the constants
$\{\varepsilon_{0,\alpha},\varepsilon_{1,\alpha},\varepsilon_{3,\alpha},\varepsilon_{4,\alpha},
\varepsilon_{7,\alpha},\varepsilon_{14,\alpha}\}$. Then in Section~\ref{sect-3.3}, we use product formulas
to determine $\{\varepsilon_{6,\alpha},\varepsilon_{12,\alpha},\varepsilon_{13,\alpha}\}$.
We complete the computation in Section~\ref{sect-3.4} using warped products.

\subsection{The proof of Lemma~\ref{lem-1.4}}\label{sect-3.1}
We apply the analysis of Section~\ref{sect-2} to the 1-dimensional setting. We work in the scalar
setting and set $D=-\partial_x^2$. Consequently
\begin{eqnarray*}
&&p_2(x,\xi)=\xi^2,\quad p_1(x,\xi)=0,\quad p_0(x)=0,\\
&&r_0(x,\xi;\lambda)=(\xi^2-\lambda)^{-1},\quad r_n(x,\xi;\lambda)=0\text{ for }n\ge1,\\
&&e_0(x,\xi;t)=e^{-t\xi^2},\quad e_n(x,\xi;t)=0\text{ for }n\ge 1\,.
\end{eqnarray*}
Consequently we have $K_n=0$ for $n\ge1$ while
\begin{eqnarray*}
K_0(x,\tilde x;t)&=&\frac1{2\pi}\int_{-\infty}^\infty e^{-\sqrt{-1}(x-\tilde x)\cdot\xi}e^{-t\xi^2}d\xi\\
&=&\frac1{2\pi}e^{-(x-\tilde x)^2/(4t)}\int_{-\infty}^\infty e^{-t|\xi|^2}d\xi\\
&=&\frac1{\sqrt{4\pi t}}e^{-(x-\tilde x)^2/(4t)}\,.
\end{eqnarray*}
This is, of course, not surprising as this is the heat kernel in flat space. Let
\begin{eqnarray*}
&&f_1(t):=\frac1{\sqrt{4\pi t}}\int_{x=0}^\infty\int_{\tilde x=-\infty}^\infty
e^{-(x-\tilde x)^2/(4t)}\phi(x)\rho(\tilde x)d\tilde x dx,\\
&&f_2(t):=\frac1{\sqrt{4\pi t}}\int_0^\infty\int_0^\infty e^{-(x+\tilde x)^2/(4t)}\phi(x)\rho(-\tilde
x)d\tilde xdx\,.
\end{eqnarray*}
We may then express $\beta(\phi,\rho,D_M)(t)=f_1(t)-f_2(t)$. We
change variables setting $\tilde x=x+u$ to express
$$
f_1(t)\sim\frac1{\sqrt{4\pi
t}}\int_{x=0}^\infty\int_{u=-\infty}^\infty\sum_{k=0}^\infty \frac
1{k!}\phi(x)\rho^{(k)}(x)u^k e^{-u^2/(4t)}dudx\,.
$$
In the sum, we must have $k=2\bar k$ is even.
We integrate by parts $\bar k$ times to evaluate the constants which appear. Alternatively we change variables $u^2=4tv$
and use standard formulae for the $\Gamma$-function to obtain that

\begin{eqnarray*}
&&\frac1{\sqrt{4\pi t}}\int_{-\infty}^\infty\frac1{(2\bar
k)!}u^{2\bar k}e^{-u^2/(4t)}du
=\frac2{\sqrt{4\pi t}(2\bar k)!}\int_0^\infty u^{2\bar k}e^{-u^2/(4t)}du\\
&=&\frac{2^{2\bar k}t^{\bar k}}{\sqrt{\pi}(2\bar k)!}\int_0^{\infty}v^{\bar k-\frac12}e^{-v}dv
=\frac{2^{2\bar k}\Gamma(k+\frac12)t^{\bar k}}{\sqrt{\pi}(2\bar
k)!}=\frac{t^{\bar k}}{\bar k!}.
\end{eqnarray*}
The interior terms arise from expanding
$$f_1(t)\sim\sum_{\bar k}\frac{t^{\bar k}}{\bar k!}\int_{0}^\infty \phi(x)\rho^{(2\bar k)}(x)dx
\sim\sum_{\bar k}(-1)^{\bar k}\frac{t^{\bar k}}{\bar k!}\int_0^\infty\phi(x)D^{\bar k}\rho(x)dx\,.$$

Next we evaluate $f_2$ (and we have to subtract this term). We expand
$$\phi(x)\sim x^{-\alpha}\sum_i\phi_ix^i\text{ and }\rho(\tilde x)\sim\sum_j\rho_j\tilde x^j\,.$$
We do not put in the factorials so $\rho_j=\frac1{j!}\rho^{(j)}(0)$.
$$
f_2(t)\sim\frac1{\sqrt{4\pi t}}\int_{0}^\infty\int_0^\infty e^{-(x+\tilde x)^2/(4t)} \sum_{i,j}
\phi_i\rho_jx^{i-\alpha}\tilde x^jd\nu_xd\nu_{\tilde x} \,.$$ We
change variables to set $x=\sqrt t u$ and $\tilde x=\sqrt t\tilde
u$ to complete the proof of Lemma~\ref{lem-1.4} by expressing
\medbreak $\displaystyle
f_2(t)\sim\frac1{\sqrt{4\pi}}\sum_{i,j}t^{(i+j-\alpha+1)/2}(-1)^j\phi_i\rho_j
\times \int_{0}^\infty\int_{0}^\infty e^{-(u+\tilde
u)^2/4}u^{i-\alpha}\tilde u^jdud\tilde u$.\hfill\qed

\subsection{Evaluating the constants for the $1$-dimensional case}\label{sect-3.2}
We use e.g. Mathematica \cite{M} to compute the coefficients of
Lemma~\ref{lem-2.1}: \medbreak $\displaystyle
\varepsilon_{0,\alpha}=-\frac1{\sqrt{4\pi}}\int_0^\infty\int_0^\infty
x^{-\alpha}e^{-(x+y)^2/4}
dxdy=\frac{2^{1-\alpha}\Gamma(\frac{2-\alpha}2)}{(-1+\alpha)\sqrt{4\pi}}=\frac{c_\alpha}2$
\medbreak $\displaystyle
\varepsilon_{1,\alpha}=-\frac1{\sqrt{4\pi}}\int_0^\infty\int_0^\infty
x^{1-\alpha}e^{-(x+y)^2/4}
dxdy=\frac{2^{2-\alpha}\Gamma(\frac{3-\alpha}2)}{(-2+\alpha)\sqrt{4\pi}}=\frac{c_{\alpha-1}}2$
\medbreak $\displaystyle
\varepsilon_{3,\alpha}=+\frac1{\sqrt{4\pi}}\int_0^\infty\int_0^\infty
x^{-\alpha}ye^{-(x+y)^2/4}
dxdy=-\frac{2^{1-a}\Gamma(\frac{1-\alpha}2)}{(-2+\alpha)\sqrt{4\pi}}=\frac{-c_{\alpha-1}}{2(1-\alpha)}$
\medbreak
$\displaystyle\varepsilon_{4,\alpha}=-\frac1{\sqrt{4\pi}}\int_0^\infty\int_0^\infty
x^{2-\alpha}e^{-(x+y)^2/4}
dxdy=\frac{2^{3-\alpha}\Gamma(\frac{4-\alpha}2)}{(-3+\alpha)\sqrt{4\pi}}=\frac{c_{\alpha-2}}2$
\medbreak $\displaystyle
\varepsilon_{14,\alpha}=+\frac1{\sqrt{4\pi}}\int_0^\infty\int_0^\infty
x^{1-\alpha}ye^{-(x+y)^2/4}
dxdy=-\frac{2^{2-a}\Gamma(\frac{2-\alpha}2)}{(-3+\alpha)\sqrt{4\pi}}=\frac{-c_{\alpha-2}}{2(2-\alpha)}$
\medbreak $\displaystyle
\varepsilon_{7,\alpha}=-\frac1{\sqrt{4\pi}}\int_0^\infty\int_0^\infty
x^{-\alpha}y^2e^{-(x+y)^2/4}
dxdy=-\frac{2^{3-\alpha}\Gamma(\frac{2-\alpha}2)}{(3-4\alpha+\alpha^2)\sqrt{4\pi}}$
\medbreak\qquad\qquad\qquad
$\displaystyle=\frac{c_{\alpha-2}}{(2-3\alpha+\alpha^2)}$.

\subsection{Product formulae}\label{sect-3.3} We evaluate
$\{\varepsilon_{6,\alpha},\varepsilon_{12,\alpha},\varepsilon_{13,\alpha}\}$.
Let $(N,g_N)$ be a closed Riemannian manifold, let $D_N$ be the scalar Laplacian on $N$,
let $S=(S^1,dx^2)$, and let $D_S=-\partial_x^2$ where $x$ is the usual periodic angular parameter on
the circle $S^1$.
We consider the product manifold.
Let
$$\begin{array}{ll}
(M,g_M):=(N\times S^1,g_N+dx^2),&D_M:=D_N+D_S,\\
\Sigma:=[0,\pi],&\Omega=N\times\Sigma,\\
\phi(x,z)=\phi_\Sigma(x)\phi_N(z),&\rho(x,z)=\rho_\Sigma(x)\rho_N(z)
\end{array}$$
for $\phi_N\in C^\infty(N)$, $\rho_N\in C^\infty(N)$, $\phi_\Sigma\in\mathcal{K}_\alpha(\Sigma)$,
and $\phi_\Sigma\in C^\infty(\Sigma)$.
As the structures decouple,
\begin{eqnarray*}
&&e^{-tD_M}=e^{-tD_S}e^{-tD_N}\text{ so}\\
&&\beta_\Omega(\phi,\rho,D_M)(t)=\beta_\Sigma(\phi,\rho,D_S)(t)\cdot
\beta_N(\phi_N,\rho_N,D_N)(t)\,.
\end{eqnarray*}
There are, of course, no boundary terms in the asymptotic series
for $\beta_N$, and the interior terms are given by
Lemma~\ref{lem-1.4}. Equating the two asymptotic series then
permits us to conclude that
$\beta_{2,\alpha}^{\partial\Omega}(\phi,\rho,D_M)=\mathcal{E}_2+\mathcal{E}_0$
where
\begin{eqnarray*}
&&\mathcal{E}_2:=\int_{\partial\Sigma}\beta_{2,\alpha}^{\partial\Sigma}
(\phi_\Sigma,\rho_\Sigma,D_\Sigma)(x)dx\cdot\int_N\phi_N(z)\rho_N(z)dz\text{ and}\\
&&\mathcal{E}_0:=\frac12c_{\alpha-2}\int_\Sigma \phi_{\Sigma,0}(y)\rho_{\Sigma,0}(y)dy\cdot
\int_N-\phi_N(z)\cdot D_N\rho_N(z)dz\,.
\end{eqnarray*}
We suppress terms not of interest to equate
\begin{eqnarray*}
&&\int_{\partial\Sigma}\int_N\phi_{\Sigma,0}(x)\rho_{\Sigma,0}(x)\\
&&\qquad\cdot
\left\{\varepsilon_{6,\alpha}E_N(z)\phi_N(z)\rho_N(z)+\varepsilon_{12,\alpha}\phi_{N:a}\rho_{N:a}
+\varepsilon_{13,\alpha}\tau\phi_N\rho_N\right\}dxdz\\
&=&
\frac12c_\alpha\int_{\partial\Sigma}\phi_{\Sigma,0}(x)\rho_{\Sigma,0}(x)dx\cdot
\int_N\phi_N(z)(\rho_{N;aa}+E)\rho_N(z)dz\,.
\end{eqnarray*}
Integrating by parts on $N$ permits us to see
$$\int_N\phi_N(z)\rho_{N;aa}(z)dz=-\int_N\phi_{N;a}(z)\rho_{N;a}(z)dz\,.$$
Applying the recursion relation of Equation~(\ref{eqn-1.f}) then yields:
$$
-\varepsilon_{12,\alpha}=\varepsilon_{6,\alpha}=\frac12c_\alpha=-\frac{\alpha-3}{2(\alpha-1)(\alpha-2)}\frac12c_{\alpha-2},\quad\text{ and }
\quad\varepsilon_{13,\alpha}=0\,.
$$

\subsection{Warped products}\label{sect-3.4}
We now determine the coefficients involving $L$ and $\operatorname{Ric}$.
The fact that we are working with quite general operators
is now crucial. We extend the formalism
of Section~\ref{sect-3.3} and consider warped products. Let
$$\Sigma=[0,\pi],\quad S=[0,2\pi]/0\sim2\pi,\quad D_S=-\partial_x^2\,.$$
Let $\mathbb{T}^{m-1}$ be the torus with periodic
parameters $(\theta_1,...,\theta_{m-1})$, let $M=
\mathbb{T}^{m-1}\times S^1$, and let $\Omega=
\mathbb{T}^{m-1}\times\Sigma$. Let $f_a\in C^\infty(S)$ be a
collection of smooth functions satisfying $f_a(0)=0$ and
$f_a\equiv0$ near $x=\pi$. Let $\delta_a\in\mathbb{R}$. Set
$$\begin{array}{ll}
M:=\mathbb{T}^{m-1}\times S,&
\displaystyle ds^2_M=\sum_{a=1}^{m-1}e^{2f_a(x)}d\theta_a\circ d\theta_a+dx\circ dx,\\
\Omega:=\mathbb{T}^{m-1}\times[0,\pi],&\displaystyle
D_M:=-\sum_{a=1}^{m-1}e^{-2f_a(x)}(\partial_{\theta_a}^2+\delta_a\partial_{\theta_a})-\partial_x^2\,.
\end{array}$$
Let $\phi_\Sigma\in\mathfrak{K}_\alpha(\Sigma)$ with $\phi_\Sigma$ vanishing identically near $\pi$ and
let $\rho_\Sigma\in C^\infty([0,\pi])$ with $\rho_\Sigma$ vanishing identically near $\pi$ as well. Set:
$$\phi_\Omega(x,y)=\phi_\Sigma(x)\text{ and }\rho_\Omega(x,y)=\rho_\Sigma(x)e^{-\sum_if_a(x)}\,.$$

\begin{lemma}\label{lem-3.1}
$\beta_{j,\alpha}^{\partial M}(\phi_\Omega,\rho_\Omega,D_M)
=\beta_{j,\alpha}^{\partial\Sigma}(\phi_\Sigma,\rho_\Sigma,D_S)\operatorname{vol}(\mathbb{T}^{m-1})$ for $j\ge0$.
\end{lemma}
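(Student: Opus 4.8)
The plan is to exploit the fact that the warped product $D_M$ decouples fiberwise over the torus directions, so that the heat semigroup $e^{-tD_M}$ can be Fourier-analyzed in the periodic parameters $(\theta_1,\dots,\theta_{m-1})$. First I would write $\phi_\Omega$ and $\rho_\Omega$ as above and observe that since neither depends on $\theta$, only the zero Fourier mode in each $\theta_a$ contributes to the pairing $\beta_\Omega(\phi_\Omega,\rho_\Omega,D_M)(t)$; the higher modes integrate to zero against the $\theta$-constant data. On the zero mode, the operator $D_M$ acts as $-\partial_x^2$ on functions of $x$ alone, so the relevant heat kernel on the zero mode is governed by the one-dimensional operator $D_S=-\partial_x^2$, but with the Riemannian volume element $e^{\sum_a f_a(x)}dx\,d\theta$ rather than $dx\,d\theta$. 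The point of choosing $\rho_\Omega=\rho_\Sigma e^{-\sum_a f_a}$ is precisely to cancel this warping factor in the volume element: $\langle\phi_\Omega,\rho_\Omega\rangle\,e^{\sum_a f_a(x)}\,dx\,d\theta = \phi_\Sigma(x)\rho_\Sigma(x)\,dx\,d\theta$. Integrating out the torus gives the factor $\operatorname{vol}(\mathbb{T}^{m-1})$, and the remaining $x$-integral reproduces $\beta_\Sigma(\phi_\Sigma,\rho_\Sigma,D_S)(t)$.

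More carefully, I would justify the reduction to the zero mode by the exponential decay of the heat kernel away from the diagonal (as already invoked in Section~\ref{sect-2}), localizing near $x=0$ where the data is supported and where, by hypothesis, things are well controlled, while near $x=\pi$ the functions $f_a$, $\phi_\Sigma$, $\rho_\Sigma$ all vanish identically so there is no contribution there. The key computation is then that the one-dimensional problem with operator $-\partial_x^2$ and specific heat $\rho_\Sigma e^{-\sum_a f_a}$ but weighted volume $e^{\sum_a f_a}dx$ coincides, term by term in the small-$t$ expansion, with the unweighted one-dimensional problem for $(\phi_\Sigma,\rho_\Sigma,D_S)$. Since $f_a(0)=0$, the weight $e^{\sum_a f_a}$ equals $1$ on $\partial\Sigma$, so the boundary terms match verbatim. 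One must check that the interior terms also agree; but interior terms are local and, by Lemma~\ref{lem-1.4}, given by $\int \phi\cdot D^n\rho\,dx$ against the appropriate measure — here I would use that we only care about the boundary coefficients $\beta_{j,\alpha}^{\partial M}$, and that the interior contributions, being smooth local integrals, are unaffected by the bijection between the two one-dimensional models (or simply subtract them off using the $\Omega=M$ comparison as in Section~\ref{sect-2.7}).

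The main obstacle I anticipate is not the algebra but the bookkeeping: one must verify that the heat content operator for $D_M$ genuinely splits as (torus average) $\times$ (one-dimensional operator on the zero mode) \emph{at the level of the asymptotic series}, i.e. that the off-diagonal and higher-mode contributions are exponentially small in $t^{-1}$ uniformly, so they do not pollute any coefficient $\beta_{j,\alpha}^{\partial M}$. This follows from the Gaussian off-diagonal bounds established in Section~\ref{sect-2} together with the compactness of $\mathbb{T}^{m-1}$ and the fact that the lowest nonzero eigenvalue of $-\partial_{\theta_a}^2$ on $S^1$ is bounded below; but it needs to be stated. A second, more delicate point — flagged in the remark preceding the lemma about using analytic continuation in $\alpha$ for Lemma~\ref{lem-3.1} — is that the identity of asymptotic series should first be checked for $\Re(\alpha)$ in a convenient range (or for $\alpha$ real and suitably negative, where everything is manifestly absolutely convergent) and then extended to all $\Re(\alpha)<1$ by the holomorphic dependence of all the coefficients on $\alpha$ noted in the remark at the end of Section~\ref{sect-2.8}.

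Once these are in place, the proof is: expand $\beta_\Omega(\phi_\Omega,\rho_\Omega,D_M)(t)$ using $e^{-tD_M}$, project onto the zero Fourier mode (the rest being exponentially small), absorb the warping factor $e^{\sum_a f_a}$ from the volume element against the $e^{-\sum_a f_a}$ in $\rho_\Omega$, factor out $\operatorname{vol}(\mathbb{T}^{m-1})$, identify the surviving expression with $\beta_\Sigma(\phi_\Sigma,\rho_\Sigma,D_S)(t)\operatorname{vol}(\mathbb{T}^{m-1})$, and read off the boundary coefficients — giving $\beta_{j,\alpha}^{\partial M}(\phi_\Omega,\rho_\Omega,D_M)=\beta_{j,\alpha}^{\partial\Sigma}(\phi_\Sigma,\rho_\Sigma,D_S)\operatorname{vol}(\mathbb{T}^{m-1})$ for every $j\ge0$, after discarding the interior terms which are common to both sides. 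This lemma then feeds into Section~\ref{sect-3.4}: because the warped product has nontrivial second fundamental form $L_{ab}$ at $\partial\Omega$ governed by the $f_a'(0)$ and nontrivial $\operatorname{Ric}_{mm}$ governed by the $f_a''$, while the right-hand side is the purely one-dimensional heat content with \emph{no} such terms, comparing the two expansions via Lemma~\ref{lem-1.5} pins down the universal constants $\varepsilon_{2,\alpha},\varepsilon_{5,\alpha},\varepsilon_{8,\alpha},\varepsilon_{9,\alpha},\varepsilon_{10,\alpha},\varepsilon_{11,\alpha}$.
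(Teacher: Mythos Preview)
Your approach is essentially the paper's, but you over-engineer the decoupling and misread the role of analytic continuation. The paper does not invoke Fourier modes or off-diagonal decay at all: since $\phi_\Omega$ is $\theta$-independent and $D_M$ kills the $\theta$-derivatives on such functions, one has the \emph{exact} identity $e^{-tD_M}\phi_\Omega(x,y;t)=e^{-tD_S}\phi_\Sigma(x;t)$, not merely up to exponentially small errors; there are no higher modes to control because the initial datum lies entirely in the zero mode and the semigroup preserves this. Combined with $\rho_\Omega\,d\nu_M=\rho_\Sigma\,dx\,dy$, this gives $\beta_\Omega(\phi_\Omega,\rho_\Omega,D_M)(t)=\beta_\Sigma(\phi_\Sigma,\rho_\Sigma,D_S)(t)\operatorname{vol}(\mathbb{T}^{m-1})$ as an exact equality of functions of $t$.

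The analytic continuation is then needed not for convergence but to separate interior from boundary contributions in the asymptotic series: for $\alpha\notin\mathbb{Z}$ the powers $t^n$ (interior) and $t^{(1+j-\alpha)/2}$ (boundary) are distinct, so matching the two expansions forces the boundary coefficients to agree term by term; holomorphy in $\alpha$ then handles $\alpha\in\mathbb{Z}$. Your attempt to ``discard the interior terms which are common to both sides'' directly would require verifying $\int_\Omega\langle\phi_\Omega,\tilde D_M^n\rho_\Omega\rangle\,d\nu_M=\operatorname{vol}(\mathbb{T}^{m-1})\int_\Sigma\phi_\Sigma D_S^n\rho_\Sigma\,dx$, which is messier than the paper's route because $\tilde D_M$ involves the warping factor and the $\delta_a$ terms.
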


\begin{proof} Note that $x$ is
the geodesic distance to $\{0\}$ in $\Sigma$ and that $x$ is the
geodesic distance to $\{0\times\mathbb{T}^{m-1}\}$ in
$\Omega$; the component where $x=\pi$ plays no role as $\phi$ and
$\rho$ vanish identically near this component. Since $\phi_\Omega$
is independent of $y\in\mathbb{T}$, the problem decouples and
$$
\left\{e^{-tD_M}\phi_\Omega\right\}(x,y;t)=\left\{e^{-tD_S}\phi_\Sigma\right\}(x;t)\,.
$$
The Riemannian measure on $M$ takes the form:
$$d\nu_M=\sqrt{\det g_{ij}}dydx=e^{\sum_af_a}dydx\,.$$
Since $\rho_\Omega d\nu_\Omega=\rho_\Sigma dxdy$, we have:
$$
\beta_{\Omega}(\phi_\Omega,\rho_\Omega,D_M)(t)=\beta_\Sigma(\phi_\Sigma,\rho_\Sigma,D_S)(t)\cdot
\operatorname{vol}(\mathbb{T}^{m-1})\,.$$ Lemma \ref{lem-3.1} now
follows for $\alpha\not\in\mathbb{Z}$ since the interior
invariants and the boundary invariants do not interact.
Since the invariants $\beta_{j,\alpha}^{\partial \Omega}$ and
$\beta_{j,\alpha}^{\partial\Sigma}$ are analytic in $\alpha$, the
desired conclusion also follows for $\alpha\in\mathbb{Z}$. Thus
even if one were only interested in the case $\alpha=0$, it is
convenient to have more general values of $\alpha$ available.
\end{proof}

We apply Lemma~\ref{lem-1.5}.
Although the structures are flat on $\Sigma$, they are not flat on $\Omega$ and this makes all the difference.
We determine the relevant tensors as follows:
$$
\begin{array}{ll}
\Gamma_{abm}=-f_a^\prime\delta_{ab}e^{2f_a},&\Gamma_{ab}{}^m=-f_a^\prime e^{2f_a}\delta_{ab},\\
\Gamma_{amb}=f_a^\prime\delta_{ab}e^{2f_a}
,&\Gamma_{am}{}^b=f_a^\prime\delta_{a,b},
   \vphantom{\vrule height 12pt}\\
L_{ab}=\Gamma_{ab}{}^m|_{\partial M}=-f_a^\prime\delta_{ab},&\vphantom{\vrule height 12pt}\\
\textstyle\omega_a={\textstyle\frac12}e^{2f_a}\delta_a,&\tilde\omega_a=-\omega_a
   =-{\textstyle\frac12}e^{2f_a}\delta_a,
   \vphantom{\vrule height 12pt}\\
\omega_m=-{\textstyle\frac12}\sum_af_a^\prime,&
\tilde\omega_m=-\omega_m={\textstyle\frac12}\sum_af_a^\prime\,.\vphantom{\vrule
height 12pt}
\end{array}$$
Consequently: \medbreak\qquad
$R_{ambm}=g((\nabla_a\nabla_m-\nabla_m\nabla_a)e_b,e_m)
=\Gamma_{ac}{}^m\Gamma_{mb}{}^c-\partial_m\Gamma_{ab}{}^m$
\smallbreak\qquad\quad
$=\{-(f_a^\prime)^2+f_a^{\prime\prime}+2(f_a^\prime)^2\}e^{2f_a}\delta_{ab}$,
\medbreak\qquad
$\operatorname{Ric}_{mm}=-\textstyle\sum_a\left\{f_a^{\prime\prime}+(f_a^\prime)^2\right\}$,
\medbreak\qquad $E|_{\partial
M}=-\partial_m\omega_m-\omega_a^2-\omega_m^2+\omega_m\Gamma_{aa}{}^m$
\smallbreak\qquad\quad
$=\textstyle\frac12\sum_af_a^{\prime\prime}-\frac14\sum_a\delta_a^2-\frac14\sum_{a,b}f_a^\prime
f_b^\prime+\frac12\sum_{a,b}f_a^\prime f_b^\prime$
\medbreak\qquad\quad
$=\textstyle\frac12\sum_af_a^{\prime\prime}-\frac14\sum_a\delta_a^2+\frac14\sum_{a,b}f_a^\prime
f_b^\prime$.

\medbreak\noindent We introduce the notation $\phi_\Omega$ and $\rho_\Omega$ to emphasize
we are computing with the structures on $M$ and not on $S$. We evaluate on the component $x=0$:
\medbreak\quad
$\phi_{\Omega,0}|_{x=0}=\phi_{\Sigma,0}(0)$,
\medbreak\quad
$\phi_{\Omega,1}|_{x=0}=\{\nabla_{\partial x}(\phi_{\Sigma,0}+x\phi_{\Sigma,1})\}|_{x=0}
=\{(\partial_x-\textstyle{\textstyle\frac12}\sum_af_a^\prime)(\phi_{\Sigma,0}+x\phi_{\Sigma,1})\}|_{x=0}$
\smallbreak\qquad\qquad\phantom{..}
$=-{\textstyle\frac12}\sum_af_a^\prime\phi_{\Sigma,0}(0)+\phi_{\Sigma,1}(0)$,
\medbreak\quad
$\textstyle\phi_{\Omega,2}|_{x=0}={\textstyle\frac12}\{(\nabla_{\partial_x})^2
(\phi_{\Sigma,0}+x\phi_{\Sigma,1}+x^2\phi_{\Sigma,2})\}|_{x=0}$
\smallbreak\qquad\qquad\phantom{..}
$={\textstyle\frac12}\{(\partial_x
-\textstyle{\textstyle\frac12}\sum_af_a^\prime)^2(\phi_{\Sigma,0}+x\phi_{\Sigma,1}+x^2\phi_{\Sigma,2})\}|_{\partial\Sigma}$
\smallbreak\qquad\qquad\phantom{.}
$=\{{\textstyle\frac18}\sum_{a,b}f_a^\prime f_b^\prime
-{\textstyle\frac14}\sum_af_a^{\prime\prime}\}\phi_{\Sigma,0}(0)-\frac12\sum_af_a^\prime\phi_{\Sigma,1}(0)
+\phi_{\Sigma,2}(0)$,
\medbreak\quad
$\rho_{\Omega,0}|_{x=0}=\rho_{\Sigma,0}(0)$,
\medbreak\quad
$\rho_{\Omega,1}|_{x=0}=\{\tilde\nabla_{\partial x}(\rho)\}|_{x=0}
=\{(\partial_x+{\textstyle\frac12}\sum_af_a^\prime)(e^{-\sum_af_a})
(\rho_{\Sigma,0}+x\rho_{\Sigma_1})\}|_{x=0}$
\smallbreak\qquad\qquad\phantom{..}
$=-{\textstyle\frac12}\sum_af_a^\prime\rho_{\Sigma,0}(0)+\rho_{\Sigma,1}(0)$,
\smallbreak\quad
$\rho_{\Omega,2}|_{x=0}={\textstyle\frac12}\{(\tilde\nabla_{\partial
r})^2\rho_\Sigma\}|_{x=0} ={\textstyle\frac12}\{(\partial_x
+{\textstyle\frac12}\sum_af_a^\prime)^2(e^{-\sum_af_a})\}|_{x=0}$
\smallbreak\qquad\qquad\phantom{.}
$=\{\textstyle{\textstyle\frac18}\sum_{.a,b}f_a^\prime f_b^\prime
-{\textstyle\frac14}\sum_af_a^{\prime\prime}\}\rho_{\Sigma,0}(0)
-{\textstyle\frac12}\sum_af_a^\prime\rho_{\Sigma,1}(0)+\rho_{\Sigma,2}(0)$,
\medbreak\quad
$\phi_{\Omega,0:a}\rho_{\Omega,0:a}=-\frac14\sum_a\delta_a^2$.
\medbreak\noindent
The structures defined by $f_a^\prime$, $f_a^{\prime\prime}$, and $\delta_a$ do not appear
in $\beta_{j,\alpha}^{\partial\Sigma}$ and thus these terms
must give zero in $\beta_{j,\alpha}^{\partial\Omega}$. {By considering the monomial
$\sum_{a,b}f_a^\prime f_b^\prime\phi_{\Sigma,0}(0)\rho_{\Sigma,0}(0)$
in $\beta_{2,\alpha}^{\partial M}$, we obtain the relation
$$\textstyle\frac18\varepsilon_{4,\alpha}
+{\textstyle\frac12}\varepsilon_{5,\alpha}+{\textstyle\frac14}\varepsilon_{6,\alpha}
+{\textstyle\frac18}\varepsilon_{7,\alpha}
+{\textstyle\frac12}\varepsilon_{8,\alpha}
+\varepsilon_{10,\alpha}{+\frac14\varepsilon_{14,\alpha}}=0\,.$$
We obtain other relations by considering suitable monomials:
$$\begin{array}{ll}
\text{Relation}&\text{Monomial}\pbg\\
-{\textstyle\frac12}\varepsilon_{1,\alpha}-\varepsilon_{2,\alpha}-{{\textstyle\frac12}\varepsilon_{3,\alpha}}=0,
&\sum_af_a^\prime\phi_{\Sigma,0}(0)\rho_{\Sigma,0}(0),\pbg\\
-{\textstyle\frac14}(\varepsilon_{6,\alpha}+\varepsilon_{12,\alpha})=0,&
\sum_a\delta_a^2\phi_{\Sigma,0}(0)\rho_{\Sigma,0}(0),\pbg\\
\textstyle-\varepsilon_{9,\alpha}+\varepsilon_{11,\alpha}=0,&
\sum_a(f_a^\prime)^2\phi_{\Sigma,0}(0)\rho_{\Sigma,0}(0),\pbg\\
-{\textstyle\frac12}\varepsilon_{4,\alpha}-\varepsilon_{5,\alpha}-{{\textstyle\frac12}\varepsilon_{14,\alpha}}=0,&
\sum_af_a^\prime\phi_{\Sigma,1}(0)\rho_{\Sigma,0}(0),\pbg\\
-{\textstyle\frac12}\varepsilon_{14,\alpha}-\varepsilon_{8,\alpha}-{{\textstyle\frac12}\varepsilon_{7,\alpha}}=0,&
\sum_af_a^\prime\phi_{\Sigma,0}(0)\rho_{\Sigma,1}(0),\pbg\\
-{\textstyle\frac14}\varepsilon_{4,\alpha}
+{\textstyle\frac12}\varepsilon_{6,\alpha}-{\textstyle\frac14}\varepsilon_{7,\alpha}
-\varepsilon_{9,\alpha}=0,&
\sum_af_a^{\prime\prime}\phi_{\Sigma,0}(0)\rho_{\Sigma,0}(0)\,.\pbg
\end{array}$$
Theorem~\ref{thm-1.6} follows from these equations and the relations established previously.}

\section{Further functorial properties}\label{sect-4}
In Section~\ref{sect-4.1}, we examine dimension shifting and in Section~\ref{sect-4.2},
we relate the Dirichlet and Neumann
heat content asymptotics to the asymptotics we have been studying. In addition to providing
useful crosschecks on our work, these properties are worth noting as they promise to be useful
in other contexts.

\subsection{Dimension shifting}\label{sect-4.1}
Let $\Re(\alpha)<<0$.
If $\phi\in\mathcal{K}_{\alpha-1}$, then we may regard $\phi$ as defining an element
$\tilde\phi\in\mathcal{K}_{\alpha}$. If we expand $\phi=r^{-\alpha+1}(\phi_0+r\phi_1+...)$,
then $\tilde\phi=r^{-\alpha}(0+r\phi_0+r^2\phi_1+...)$. Consequently $\tilde\phi_i=\phi_{i-1}$ for $i\ge1$
and
$$\beta_{j,\alpha}^{\partial\Omega}(\tilde\phi,\tilde\rho,D_M)
=\beta_{j-1,\alpha-1}^{\partial\Omega}(\phi,\rho,D_M)\,.$$
Examining the formulas of Lemma~\ref{lem-1.5} then yields the
relations:
$$\begin{array}{llll}
\varepsilon_{1,\alpha}=\varepsilon_{0,\alpha-1},&
\varepsilon_{4,\alpha}=\varepsilon_{0,\alpha-2},&
\varepsilon_{5,\alpha}=\varepsilon_{2,\alpha-1},&
\varepsilon_{14,\alpha}=\varepsilon_{3,\alpha-1}.
\end{array}$$
These hold, of course, only if $\Re(\alpha)<0$; we use
analytic continuation to derive the general result. Once again, it
is convenient to have values of $\alpha$ other than $\alpha=0$.

\subsection{The Dirichlet and Neumann heat content asymptotics}\label{sect-4.2}
There is a useful relationship between the heat content asymptotics being studied at present and the ones
studied previously.
\begin{lemma}
Let $(M,g)$ be a closed Riemannian manifold. Let $T$ be an isometric involution of $M$
with $\operatorname{Fix}(T)=N$ a totally geodesic submanifold of co-dimension $1$.
Assume $M-N$ decomposes as the union of two open sub manifolds $M_+\cup M_-$ which
are interchanged by $T$.
Let $\Omega_\pm:=M_\pm\cup N$. Then
$$\beta_\Omega(\phi,\rho,\Delta)(t)=
\textstyle\frac12\{\beta_D(\phi,\rho,\Delta)(t)+\beta_N(\phi,\rho,\Delta)(t)\}\,.$$
\end{lemma}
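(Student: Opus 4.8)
The idea is to decompose the heat kernel on the closed manifold $M$ into even and odd parts with respect to the involution $T$, and to recognise these pieces as the Neumann and Dirichlet heat kernels on $\Omega_+$. Concretely, write $K_M(x,\tilde x;t)$ for the heat kernel of $\Delta$ on $M$, and note that since $T$ is an isometry we have $K_M(Tx,T\tilde x;t)=K_M(x,\tilde x;t)$. For $x,\tilde x\in\Omega_+=M_+\cup N$ define
\begin{equation}\label{eqn-4.a}
K_{\pm}(x,\tilde x;t):=K_M(x,\tilde x;t)\pm K_M(x,T\tilde x;t)\,.
\end{equation}
The first step is to verify that $K_+$ satisfies Neumann boundary conditions along $N$ and $K_-$ satisfies Dirichlet boundary conditions along $N$: this is the classical ``method of images'' and uses that $N=\operatorname{Fix}(T)$ is totally geodesic, so the inward normal derivative at a point of $N$ changes sign under $T$; hence $\partial_\nu K_- =0$ on $N$ by antisymmetry of the reflected term near $N$, while $K_-$ itself vanishes there. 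One then invokes the uniqueness statement of Theorem~\ref{thm-1.1}(2) (applied to $\Omega_+$, a compact manifold with smooth boundary $N$) to identify $K_-$ with the Dirichlet heat kernel $K_D$ of $\Omega_+$, and a parallel uniqueness argument identifies $K_+$ with the Neumann heat kernel $K_{\mathcal N}$.

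The second step is a bookkeeping computation on the defining integrals. Using the $T$-symmetry of $\phi$ and $\rho$ — here $\phi,\rho$ are extended by $0$ off $\Omega_+$ but, because the problem is symmetric, the relevant object on $M$ is literally the pair $(\phi_{\Omega_+},\rho_{\Omega_+})$ and its reflection — we expand
\begin{eqnarray*}
\beta_{\Omega_+}(\phi,\rho,\Delta)(t)
&=&\int_{\Omega_+}\int_{\Omega_+}\langle K_M(x,\tilde x;t)\phi(x),\rho(\tilde x)\rangle\,dx\,d\tilde x\,.
\end{eqnarray*}
On the other hand,
\begin{eqnarray*}
\textstyle\frac12\beta_{\mathcal N}(\phi,\rho,\Delta)(t)+\textstyle\frac12\beta_D(\phi,\rho,\Delta)(t)
&=&\textstyle\frac12\int_{\Omega_+}\int_{\Omega_+}\langle(K_++K_-)(x,\tilde x;t)\phi(x),\rho(\tilde x)\rangle\,dx\,d\tilde x\\
&=&\int_{\Omega_+}\int_{\Omega_+}\langle K_M(x,\tilde x;t)\phi(x),\rho(\tilde x)\rangle\,dx\,d\tilde x\,,
\end{eqnarray*}
where the cross terms involving $K_M(x,T\tilde x;t)$ cancel between $\frac12 K_+$ and $\frac12 K_-$. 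Comparing the two displays gives the identity. One must be slightly careful that ``$\beta_\Omega(\phi,\rho,\Delta)(t)$'' on the left of the claimed identity is exactly $\beta_M(\phi_{\Omega_+},\rho_{\Omega_+},\Delta)(t)=\beta_{\Omega_+}(\phi,\rho,\Delta)(t)$ in the sense of the ambient-manifold definition given in the introduction, so there is genuinely no boundary condition imposed on the left side — the boundary conditions appear only through the decomposition.

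\textbf{Main obstacle.} The only real subtlety is the image-method identification in the first step: one needs the reflected kernel $K_M(x,T\tilde x;t)$ to be smooth up to and across $N$ and to have the correct parity of normal derivative there, which is exactly where total geodesy of $N$ (equivalently, vanishing of the second fundamental form of $N$, so that the geodesic reflection in $N$ is a local isometry compatible with $T$) is used; without it $K_+$ would not be $C^1$ across $N$ and would fail to be the Neumann kernel. Granting that, the uniqueness clauses of Theorem~\ref{thm-1.1} do the rest, and the remainder is the elementary cancellation above. I would also remark that, at the level of asymptotic series, this lemma reproduces the relation $\beta_{j,\alpha}^{\partial\Omega}=\frac12(\beta_{j,\alpha}^{\partial M,\mathcal N}+\beta_{j,\alpha}^{\partial M,\mathcal D})$ coefficient-by-coefficient, which furnishes the crosscheck against Theorem~\ref{thm-1.8} advertised at the start of Section~\ref{sect-4}.
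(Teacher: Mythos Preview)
Your argument is correct and is essentially the kernel-level version of the paper's proof. The paper works on the spectral side: it splits an orthonormal basis of eigenfunctions of $\Delta$ on $M$ into $T$-even and $T$-odd parts, identifies these (after rescaling by $\sqrt2$) with the Neumann and Dirichlet eigenfunctions on $\Omega_+$, and then compares the three heat-semigroup expansions term by term. Your method of images is the dual statement at the level of heat kernels; summing the paper's spectral expansions over $n$ reproduces exactly your identity $K_\pm(x,\tilde x;t)=K_M(x,\tilde x;t)\pm K_M(x,T\tilde x;t)$, so the two arguments are interchangeable. One small slip in your write-up: where you say ``hence $\partial_\nu K_-=0$ on $N$ \ldots\ while $K_-$ itself vanishes there'' you mean $\partial_\nu K_+=0$ (Neumann) and $K_-|_N=0$ (Dirichlet), in line with your opening sentence.
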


\begin{proof} We can use the $\mathbb{Z}_2$ involution $T$ to choose a spectral resolution
$$\{\lambda_N,\phi_{N,n}\}_{n=1}^\infty\cup\{\lambda_D,\phi_{D,n}\}_{n=1}^\infty$$
for $L^2(M)$ so $T^*\phi_{N,n}=-\Phi_{N,n}$ and $T^*\phi_{D,n}=\Phi_{D,n}$. Then
$\{\lambda_{N,n},\sqrt{2}\Phi_{N,n}\}_{n=1}^\infty$ is a spectral resolution for the Neumann Laplacian on $\Omega_+$
and $\{\lambda_{D,n},\sqrt{2}\Phi_{D,n}\}_{n=1}^\infty$ is a spectral resolution for the Dirichlet Laplacian on
$\Omega_+$. We compute that
\begin{eqnarray*}
&&e^{-t\Delta_D}\phi=2\sum_ne^{-t\lambda_{N,n}}(\phi,\phi_{N,n})_{L^2}\phi_{N,n},\\
&&e^{-t\Delta_N}\phi=2\sum_ne^{-t\lambda_{D,n}}(\phi,\phi_{D,n})_{L^2}\phi_{D,n},\\
&&e^{-t\Delta}\phi=\sum_ne^{-t\lambda_{N,n}}(\phi,\phi_{N,n})_{L^2}\phi_{N,n}
    +\sum_ne^{-t\lambda_{D,n}}(\phi,\phi_{D,n})_{L^2}\phi_{D,n},\\
&&\qquad=\frac12\{e^{-t\Delta_D}\phi+e^{-t\Delta_N}\phi\}\,.
\end{eqnarray*}
The desired result now follows by taking the inner product with $\rho$ and integrating over the support of
$\rho$ which is $\Omega_+$.
\end{proof}

This is not useful for studying the terms which involve the second fundamental form $L_{ab}$. However, if
we average the remaining terms for Dirichlet and Neumann boundary conditions in
Theorem~\ref{thm-1.8}, we
get the analogous terms in Theorem~\ref{thm-1.6}.

\section{Reduction to the closed setting}\label{sect-5}
In Section~\ref{sect-2}, we used the classic calculus of
pseudo-differential operators depending on a complex parameter which was developed by
Seeley \cite{S66,S69}. That formalism
 is valid only for compact and closed manifolds. In this section, we will derive
 Theorem~\ref{thm-1.3} where $(M,g)=(\mathbb{R}^m,g_e)$ or where $(M,g)$
 is a compact subset of $\mathbb{R}^m$ of dimension $m$ from Theorem~\ref{thm-1.2}
 which dealt with compact manifolds without boundary.

 We adopt the following notational conventions.
 Let $\Omega\subset\operatorname{Interior}\{\tilde M\}\subset\tilde M\subset M$
where $\Omega$ and $\tilde M$ are compact manifolds of dimension $m$ with smooth boundaries.
Let  $\epsilon:=dist_g(\partial\Omega,\partial \tilde M)>0$. Let $\beta_\Omega^{\tilde M}$ be
the heat content of $\Omega$ in $\tilde M$ and let $\beta_\Omega^M$ be the heat content of
$\Omega$ in $M$.

\begin{theorem}\label{thm-5.1}
Assume that $(M,g)$ is complete with non-negative Ricci curvature.
Let $\rho$ be continuous on $M$ and let $\phi\in L^1(\Omega)$. Then:
$$
|\beta_\Omega^{M}(\phi,\rho,\Delta_g)(t)-
\beta_\Omega^{\tilde M}(\phi,\rho,\Delta_g)(t)|\le 2^{(2+m)/2}
\lVert\phi\rVert_{L^1(\Omega)}\lVert\rho\rVert_{L^{\infty}(\Omega)}e^{-\epsilon^2/(8t)}\,.
$$
\end{theorem}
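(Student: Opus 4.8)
The plan is to compare the two heat contents directly, kernel by kernel, and bound the difference using the fact that both $K^M$ and $K^{\tilde M}$ are minimal positive fundamental solutions, together with a Gaussian off-diagonal bound for the heat kernel on a complete manifold with $\operatorname{Ric}\ge0$. First I would write
$$
\beta_\Omega^M(\phi,\rho,\Delta_g)(t)-\beta_\Omega^{\tilde M}(\phi,\rho,\Delta_g)(t)
=\int_\Omega\int_\Omega\bigl(K^M(x,\tilde x;t)-K^{\tilde M}(x,\tilde x;t)\bigr)\phi(\tilde x)\rho(x)\,d\tilde x\,dx,
$$
using Equation~\eqref{eqn-1.c} and the observation that for $x,\tilde x\in\Omega$ the restriction of $K^M$ to $\tilde M$ differs from $K^{\tilde M}$ only because of the Dirichlet boundary condition on $\partial\tilde M$. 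By Theorem~\ref{thm-1.1}, $K^{\tilde M}$ is the Dirichlet heat kernel on $\tilde M$, and by the maximum principle (domain monotonicity of the Dirichlet heat kernel) one has $0\le K^{\tilde M}(x,\tilde x;t)\le K^M(x,\tilde x;t)$ for $x,\tilde x\in\tilde M$, so the integrand has a definite sign and $|K^M-K^{\tilde M}|=K^M-K^{\tilde M}$.

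The key step is to bound $K^M(x,\tilde x;t)-K^{\tilde M}(x,\tilde x;t)$ for $x,\tilde x\in\Omega$. I would represent this difference probabilistically or, equivalently, via Duhamel: the difference is the contribution of Brownian paths started at $\tilde x$ that reach $\partial\tilde M$ before time $t$ and then return to $x$. Concretely,
$$
K^M(x,\tilde x;t)-K^{\tilde M}(x,\tilde x;t)
=\mathbb{E}_{\tilde x}\!\left[\mathbf{1}_{\{\tau<t\}}\,K^M(x,X_\tau;t-\tau)\right],
$$
where $\tau$ is the first hitting time of $\partial\tilde M$. Since $x\in\Omega$ and $X_\tau\in\partial\tilde M$, we have $\operatorname{dist}_g(x,X_\tau)\ge\epsilon$, so the inner kernel is controlled by the Gaussian upper bound for $K^M$ on a complete manifold with non-negative Ricci curvature (Li--Yau / Cheeger--Yau), giving a factor of order $e^{-\epsilon^2/(c(t-\tau))}$; integrating the resulting estimate and combining with the probability that $\tau<t$ (itself exponentially small in $\epsilon^2/t$) yields a bound of the form $C\,e^{-\epsilon^2/(8t)}$ uniformly in $x,\tilde x\in\Omega$, for a dimensional constant. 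Integrating against $|\phi(\tilde x)|$ and $|\rho(x)|$ and pulling out $\lVert\rho\rVert_{L^\infty(\Omega)}$ and $\lVert\phi\rVert_{L^1(\Omega)}$ then produces the stated inequality, with the explicit constant $2^{(2+m)/2}$ coming from optimizing the Gaussian bound and the exponent $\epsilon^2/(8t)$ from splitting the Gaussian exponent between the ``going out'' and ``coming back'' parts of the path (the standard trick $e^{-d^2/(4s)}\le e^{-d^2/(8t)}e^{-d^2/(8s)}$ for $s\le t$).

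The main obstacle is making the hitting-time/Gaussian estimate quantitative enough to land exactly on the constant $2^{(2+m)/2}$ and exponent $\epsilon^2/(8t)$ rather than merely $O(e^{-c\epsilon^2/t})$. I expect the cleanest route is not probabilistic but a direct Duhamel identity: writing $\partial_t(K^M-K^{\tilde M})+\Delta_g(K^M-K^{\tilde M})=0$ on $\tilde M$ with the boundary data $K^M|_{\partial\tilde M\times\{t>0\}}$, and using that $K^M(x,y;t)\le (4\pi t)^{-m/2}$-type diagonal bounds together with the sharp Gaussian decay in $\operatorname{dist}_g(x,y)$, one integrates over $\partial\tilde M\times(0,t)$. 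The delicate bookkeeping is (i) justifying the Gaussian upper bound with the right constants under only $\operatorname{Ric}\ge0$ — here one invokes the standard Li--Yau estimate, which on $\operatorname{Ric}\ge0$ manifolds gives $K^M(x,y;t)\le C_m\,V(x,\sqrt t)^{-1}e^{-\operatorname{dist}_g(x,y)^2/(4t)}\cdot(\text{mild correction})$, and one must absorb the correction and the volume factor into the clean constant — and (ii) choosing where to cut the Gaussian exponent so that the surviving factor is precisely $e^{-\epsilon^2/(8t)}$. Everything else — the sign of the integrand, the $L^1$–$L^\infty$ pairing, reducing to $x,\tilde x\in\Omega$ via the support of $\phi$ — is routine and I would state it briefly.
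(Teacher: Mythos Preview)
Your overall strategy is sound---domain monotonicity gives $0\le K^{\tilde M}\le K^M$, and the difference is controlled by how likely Brownian motion is to escape $\tilde M$---but you are working harder than necessary by aiming for a \emph{pointwise} bound on $K^M(x,\tilde x;t)-K^{\tilde M}(x,\tilde x;t)$ via Li--Yau Gaussian estimates. The paper instead integrates the $\tilde x$ variable out \emph{before} estimating: since $K^M-K^{\tilde M}\ge0$, one enlarges the inner integral from $\Omega$ to $\tilde M$ and then to $M$ for the $K^M$ term, arriving at
\[
\int_M K^M(x,\tilde x;t)\,d\tilde x-\int_{\tilde M}K^{\tilde M}(x,\tilde x;t)\,d\tilde x
=1-\int_{\tilde M}K^{\tilde M}(x,\tilde x;t)\,d\tilde x
=\mathbb{P}_x(\tau_{\partial\tilde M}<t).
\]
This exit probability is then bounded by $2^{(2+m)/2}e^{-\operatorname{dist}_g(x,\partial\tilde M)^2/(8t)}$ directly from a known estimate (Theorem~3.5.3 in Hsu's \emph{Stochastic Analysis on Manifolds}, valid under $\operatorname{Ric}\ge0$), which is precisely where the constant and the exponent come from---no Duhamel bookkeeping, no volume factors, no splitting of Gaussians. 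Your hunt-formula representation is correct, and in fact if you integrate it in $x$ over $M$ you recover exactly this exit probability; the point is that integrating first collapses the problem to a single clean cited inequality, whereas your route requires controlling $K^M(x,X_\tau;t-\tau)$ pointwise and then integrating, which is where your acknowledged difficulty with the constants arises.
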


\begin{proof}
Let $\tilde K$ be the Dirichlet heat kernel for $\tilde M$.
By minimality,
$$0\le \tilde K(x,\tilde x;t)\le K(x,\tilde
x;t)$$ for all $x\in \tilde M,\tilde x\in \tilde M, t>0$. Since $M$
is stochastically complete we have that
$$
1= \int_MK(x,\tilde x;t)d\tilde x.
$$
Moreover since $M$ and hence $\tilde M$ have non-negative Ricci
curvature we have  by Theorem 3.5.3 in \cite{Hsu} (see also Lemma
5 in \cite{B}) that
\begin{align}\label{eqn-5.a}
\int_{\tilde M}\tilde K(x,\tilde x;t)d\tilde x&\ge
1-2^{(2+m)/2}e^{-\textup{dist}_g(x,\partial \tilde
M)^2/(8t)}\nonumber \\ &=\int_M K(x,\tilde x;t)d\tilde
x-2^{(2+m)/2}e^{-\textup{dist}_g(x,\partial \tilde
M)^2/(8t)}\nonumber \\ &\ge \int_M K(x,\tilde x;t)d\tilde
x-2^{(2+m)/2}e^{-\epsilon^2/(8t)},\ \ x\in \Omega,\ t>0.
\end{align}
So \medbreak\qquad
$\displaystyle|\beta_\Omega(\phi,\rho,\Delta_g)(t)-\tilde
\beta_\Omega(\phi,\rho,\Delta_g)(t)|$ \medbreak\qquad\quad
$\displaystyle=\left|\int_{\Omega}\int_{\Omega}(K(x,\tilde
x;t)-\tilde K(x,\tilde x;t))\phi(x)\rho(\tilde x)dxd\tilde
x\right|$ \medbreak\qquad\quad$\displaystyle \le
\lVert\rho\rVert_{L^{\infty}(\Omega)}\int_{\Omega}\int_{\Omega}(K(x,\tilde
x;t)-\tilde K(x,\tilde x;t))|\phi(x)|dxd\tilde x\nonumber $
\medbreak\qquad\quad$\displaystyle\le
\lVert\rho\rVert_{L^{\infty}(\Omega)}\int_{\Omega}|\phi(x)|dx\left(\int_{\tilde
M}(K(x,\tilde x;t)-\tilde K(x,\tilde x;t))d\tilde x\right)$
\medbreak\qquad\quad$\displaystyle\le
\lVert\rho\rVert_{L^{\infty}(\Omega)}\int_{\Omega}|\phi(x)|dx\left(\int_{M}K(x,\tilde
x;t)d\tilde x-\int_{\tilde M}\tilde K(x,\tilde x;t)d\tilde
x\right)$. \medbreak\noindent Theorem~\ref{thm-5.1} now follows by
Equation~\eqref{eqn-5.a}.
\end{proof}

Suppose that $\mathcal{N}=(\mathbb{R}^m,g_e)$ or that
$\mathcal{N}$ is a compact subset of dimension $m$ with smooth
boundary in $(\mathbb{R}^m,g_e)$. The following lemma will permit
us to deduce Theorem~\ref{thm-1.2} for $\mathcal{N}$ from the
corresponding assertion for closed ambient manifolds by using
Theorem~\ref{thm-5.1} to localize matters to a small neighbourhood
$\tilde M$ of $\Omega$.

\begin{lemma} Let $(\tilde M,g_e)$ be a compact smooth manifold of dimension $m$
which is contained in $\mathbb{R}^m$. Then $(\tilde M,g_e)$ is isometric to
a compact smooth manifold of a flat $m$-dimensional torus.
\end{lemma}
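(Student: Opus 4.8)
The plan is to exhibit $(\tilde M,g_e)$ as a submanifold of a flat torus obtained by quotienting $\mathbb{R}^m$ by a sufficiently large rectangular lattice. Since $\tilde M$ is compact it is bounded, so we may fix $R>0$ with $\tilde M$ contained in the open Euclidean ball $B(0,R)$. Choose $L>2R$, set $\Lambda:=L\mathbb{Z}^m$, and let $\mathbb{T}^m:=\mathbb{R}^m/\Lambda$ be the associated flat torus; by construction the projection $\pi:\mathbb{R}^m\to\mathbb{T}^m$ is a Riemannian covering and in particular a local isometry.

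First I would verify that $\pi$ is injective on the open cube $Q:=(-L/2,L/2)^m$: if $x,x'\in Q$ satisfy $\pi(x)=\pi(x')$, then $x-x'\in\Lambda$, so each coordinate of $x-x'$ is an integer multiple of $L$ of absolute value strictly less than $L$, whence $x=x'$. Since $L/2>R$ we have $B(0,R)\subset Q$, so $\pi$ is injective on $\tilde M$. Thus $\pi|_{\tilde M}$ is an injective immersion of the compact manifold with boundary $\tilde M$; being injective with compact source, it is a homeomorphism onto its image, and $\pi(\tilde M)$ is a compact smooth submanifold (with boundary) of $\mathbb{T}^m$.

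Finally, because $\pi$ is a local isometry, the flat metric on $\mathbb{T}^m$ restricts on $\pi(\tilde M)$ to the push-forward of $g_e|_{\tilde M}$; hence $\pi|_{\tilde M}:(\tilde M,g_e)\to\pi(\tilde M)$ is an isometry onto a compact smooth submanifold of the flat torus $\mathbb{T}^m$, which is the assertion. There is no serious obstacle here; the only points requiring a little care are the choice of $L$ large enough that the bounded set $\tilde M$ fits inside a fundamental domain of $\Lambda$, and the standard fact that an injective immersion out of a compact manifold is an embedding, so that injectivity of the local isometry $\pi$ on $\tilde M$ automatically promotes it to an isometric embedding.
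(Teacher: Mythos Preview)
Your proof is correct and follows essentially the same approach as the paper: both choose a rectangular lattice large enough that $\tilde M$ sits inside a fundamental domain, and then observe that the quotient map to the resulting flat torus restricts to an isometric embedding. Your version simply supplies more detail (the explicit injectivity check on the fundamental cube and the appeal to ``injective immersion from a compact source is an embedding''), whereas the paper states the embedding without elaboration.
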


\begin{proof} Let $B_r(0)$ be the ball of radius $0$ about the origin in $\mathbb{R}^m$.
Since $\tilde M$ is compact, $\tilde M$ is contained in $B_n(0)$ for some positive integer $n$. Let
$\Gamma:=\{2n\mathbb{Z}\}^{m}$ be the rescaled integer lattice and let
$\mathcal{T}^m=\mathbb{R}^m/\Gamma$ be a flat torus.
Then $\tilde M\subset B_n(0)$ embeds isometrically in $\mathcal{T}^m$.
\end{proof}

\end{document}